\newcommand{\A}{\bm{A}}
\newcommand{\m}{\bm{m}}
\newcommand{\n}{\bm{n}}
\renewcommand{\b}{\bm{b}}
\newcommand{\f}{\bm{f}}
\newcommand{\mm}{\bm{\mathrm{m}}}
\newcommand{\mmm}{\mathrm{m}}
\newcommand{\vp}{\bm\varphi}
\renewcommand{\u}{\bm u}
\newcommand{\boA}{\mathcal{A}}
\newcommand{\boC}{\mathcal{C}}
\newcommand{\boO}{\mathcal{O}}
\newcommand{\boP}{\mathcal{P}}
\newcommand{\boR}{\mathcal{R}}
\newcommand{\gb}{{b}}
\newcommand{\gm}{{m}}
\newcommand{\gn}{{n}}
\newcommand{\gw}{{w}}
\newcommand{\dist}{\operatorname{dist}}
\renewcommand{\S}{\mathbb{S}}
\newcommand{\R}{\mathbb{R}}
\newcommand{\ptl}{{\partial}}
\providecommand{\abs}[1]{|#1 |}
\newcommand{\loc}{\mathrm{loc}}
\providecommand{\norm}[1]{\|#1 \|}
\renewcommand{\Re}{\operatorname{Re}}
\renewcommand{\Im}{\operatorname{Im}}
\newcommand{\bqq}{\begin{equation*}}
\newcommand{\eqq}{\end{equation*}}
\newcommand{\bq}{\begin{equation}}
\newcommand{\eq}{\end{equation}}
\newtheorem{thm}{Theorem}[section]
\newtheorem{prop}[thm]{Proposition}
\newtheorem{lemma}[thm]{Lemma}
\newtheorem{cor}[thm]{Corollary}
\newtheorem{remark}[thm]{Remark}
\renewcommand{\S}{\mathbb{S}}
\theoremstyle{definition}
\newcommand{\grad}{\nabla}
\begin{document}
	\title{Self-similar shrinkers of the one-dimensional Landau--Lifshitz--Gilbert equation}

	\author{
		\renewcommand{\thefootnote}{\arabic{footnote}}
		Susana Guti\'errez\footnotemark[1] ~and Andr\'e de Laire\footnotemark[2]}
	\footnotetext[1]{School of Mathematics,
		University of Birmingham, Edgbaston, Birmingham, B15 2TT, United
		Kingdom. E-mail: {\tt s.gutierrez@bham.ac.uk}}
	\footnotetext[2]{ 
		Univ.\ Lille, CNRS, UMR 8524, Inria - Laboratoire Paul Painlev\'e, F-59000 Lille, France.\\
		E-mail: {\tt andre.de-laire@univ-lille.fr}}
	\date{}
	\maketitle
	
	\begin{abstract}

		The main purpose of this paper is the analytical study of self-shrinker  solutions of the one-dimensional Landau--Lifshitz--Gilbert equation (LLG), a model describing the dynamics for the spin in ferromagnetic materials.  We show that there is a unique smooth family of backward self-similar solutions to the LLG equation, up to symmetries, 
		and we establish their asymptotics. Moreover, we obtain that in the presence of damping, the trajectories of the self-similar profiles converge  to great circles  on the sphere $\S^2$, at an exponential rate.
		
		In particular, the results presented in this paper provide  examples of blow-up in finite time,
		where the singularity develops due to rapid oscillations forming  limit  circles.
		
		\bigskip
		\bigskip
		
		\noindent{{\em Keywords and phrases:} Landau--Lifshitz--Gilbert
			equation, self-similar expanders,  backward self-similar solutions,  blow up,
			asymptotics, ferromagnetic spin chain, heat flow for harmonic maps,  quasi-harmonic sphere.
		}
		
		\medskip
		\noindent{2010 \em{Mathematics Subject Classification}:}
		82D40; 
		35C06; 
		35B44; 
		35C20; 
		53C44; 
		35Q55; 
		58E20; 
		35K55. 

		82D40;35C06;35B44; 35C20;53C44;35Q55;58E20;35K55
		
	\end{abstract}

	\section{Introduction}
	\setcounter{equation}{0}
	\numberwithin{equation}{section}
	
	\subsection{The Landau--Lifshitz--Gilbert equation: self-similar solutions}
	In this paper we continue the investigation started in \cite{gutierrez-delaire1,gutierrez-delaire2} concerning the existence and properties of self-similar solutions for the Landau--Lifshitz--Gilbert equation (LLG). 
	This equation describes the dynamics for the magnetization or spin in
	ferromagnetic materials \cite{landaulifshitz,gilbert} and is given by the system of nonlinear equations
	\begin{equation}\label{LLG}
	\ptl_t \mm= \beta \mm\times \Delta \mm -\alpha \mm \times (\mm\times \Delta\mm),
	\tag{LLG}
	\end{equation}
	where  $\mm=(\mmm_1, \mmm_2, \mmm_3):\R^N \times I\longrightarrow \S^2$
	is the spin vector, $I\subset \R$, $\beta\geq 0$, $\alpha\geq 0$, $\times$ denotes
	the usual cross-product in $\R^3$, and $\mathbb{S}^2$ is the unit
	sphere in $\mathbb{R}^3$. This model for ferromagnetic materials
	constitutes a fundamental equation in the magnetic recording industry \cite{wei2012}.
	The parameters $\beta\geq 0$ and $\alpha\geq 0$ are, respectively, the so-called exchange constant and Gilbert damping,
	and take into account the exchange of energy in the system and the effect of damping on the spin chain. By considering a time-scaling, one can assume without loss of generality that the parameters $\alpha$ and $\beta$ satisfy 
	$$
	\alpha\in[0,1]\quad \text{and}\quad  \beta=\sqrt{1-\alpha^2}.
	$$
	From a purely mathematical point of view, the LLG equation is extremely interesting since it interpolates between two fundamental geometric evolution equations, the Schr\"odinger map equation and the heat flow for harmonic maps, via specific choices of the  parameters involved. Precisely, we recall that in the limit case $\alpha= 1$ (and, consequently, $\beta=0$), \eqref{LLG} reduces to the heat flow for harmonic maps onto $\mathbb{S}^2$,
	\begin{equation}\label{HFHM}\tag{HFHM}
	\ptl_t \mm-\Delta\mm=\abs{\grad{\mm}}^2\mm,
	\end{equation}
	and,  if $\alpha=0$ (no damping), it reduces to the Schr\"odinger map equation 
	\begin{equation}\label{SM}\tag{SM}
	\ptl_t \mm= \mm\times \Delta \mm.
	\end{equation}
	When $0<\alpha<1$, \eqref{LLG}  is of parabolic type. We refer the reader to  \cite{lakshmanan,guo,gutierrez-delaire1, gutierrez-delaire2,deLaire4,deLaGra1,deLaGra2,deLaGra3} and the references therein for more details and surveys on these equations.
	
	A natural question, that has proved relevant to the  understanding of the global behavior of solutions and formation of singularities, is whether or not there exist solutions which are invariant under scalings of the equation. In the case of the LLG equation it is straightforward to see that the equation is invariant under the following scaling:  If  $\mm$  is a solution of \eqref{LLG}, then   $\mm_{\lambda}(t,x)=\mm(\lambda x,\lambda^2 t)$, for any positive number $\lambda$, is also a solution. Associated with this invariance, a solution $\mm$ of \eqref{LLG} defined on $I=\R^+$ or $I=\R^-$ is called {\it{self-similar}} if it is invariant under rescaling, that is
	$$
	\mm(x,t)= \mm(\lambda x,\lambda^2 t),\quad \forall \lambda>0, \quad \forall x\in \R^N,\quad  \forall t\in I.
	$$
	Fixing $T\in \R$ and performing a translation in time, this definition leads to  two types of self-similar solutions: 
	A forward self-similar solution or {\em expander} is a solution of the form
	\begin{equation}\label{f-expander}
	\mm(x,t)=\f\left(\frac{x}{\sqrt{t-T} }\right),\qquad {\hbox{for}}\quad (x,t)\in \R^N\times(T,\infty),
	\end{equation}
	and  a backward self-similar solution or {\em shrinker}  is a solution of the form
	\begin{equation}\label{f-profile}
	\mm(x,t)=\f\left(\frac{x}{\sqrt{T-t} }\right),\qquad {\hbox{for}}\quad (x,t)\in \R^N\times(-\infty, T),
	\end{equation}
	for some profile $\f:\mathbb{R}^N\longrightarrow \mathbb{S}^2$. In this manner, expanders evolve from a singular value at time $T$, while shrinkers evolve towards a singular value at time $T$.

	Self-similar solutions have received a lot of attention in the study of nonlinear PDEs
	because they can provide important information about the dynamics of the equations.
	While expanders are related to non-uniqueness phenomena, resolution of singularities and 
	long time description of solutions, shrinkers 
	are often related to phenomena of singularity formation (see e.g.\ \cite{book-giga,eggers}). On the other hand, the construction and understanding of the dynamics and properties of self-similar solutions also provide an idea of which are the natural spaces to develop a well-posedness theory that captures these often very physically relevant structures.  
	Examples of equations for which self-similar solutions have  been studied include, among others, the Navier--Stokes equation, semilinear parabolic equations, and geometric flows such as Yang--Mills, mean curvature flow and harmonic map flow. We refer to \cite{jia-tsai,quittner,ilmanen,struwe,bidaut-veron} and the references therein for more details.
	
	
	Although the results that will be presented in this paper relate to self-similar shrinkers of the one-dimensional LLG equation (that is, to solutions $\mm:\mathbb{R}\times I\longrightarrow \S^2$ of  LLG), for the sake of context we describe some of the most relevant results concerning maps from $\R^N\times I$ into $\S^d$, with $N\geq 2$ and $d\geq 2$.
	In this setting one should point out that the majority of the works in the literature concerning  the study of self-similar solutions of the  LLG equation are confined to the heat flow for harmonic maps equation, i.e.\ $\alpha=1$. In the case when $\alpha=1$,  the main works on the subject restrict the analysis to corotational maps
	taking values in $\S^d$, which reduces the analysis of \eqref{HFHM} to the study of a second order real-valued ODE. Then tools such as the maximum principle or the shooting method can be used to show  the existence of solutions. We refer to \cite{fan,gastel,germain-rupflin,biernat-donninger,bizon-wasserman,biernat-bizon,germain-ghoul}
	and the references therein for more details on such  results for 
	maps taking values in $\S^d$, with $d\geq 3.$ Recently, 
	Deruelle and Lamm~\cite{deruelle-lamm} have studied the Cauchy problem 
	for  the harmonic map heat flow with initial data $\mm^0 : \R^N\to \S^d$, with $N\geq 3$
	and $d\geq 2$, where $\mm^0$ is a Lipschitz 0-homogeneous function, homotopic to a constant,
	which implies the existence of expanders coming out of $\mm^0$. 
	
	When $0<\alpha\leq 1$, the existence of self-similar {expanders} for the LLG equation was recently established by the authors in \cite{gutierrez-delaire2}. This result  is a consequence of a well-posedness theorem for the  LLG equation 
	considering an initial data $\mm^0 : \R^N\to \S^2$ in the space BMO
	of functions of bounded mean oscillation.
	Notice that this result  includes in particular the case of the  harmonic map heat flow. 
	
	As mentioned before, in the absence of damping ($\alpha=0$), \eqref{LLG} reduces to  the Schr\"odinger map equation \eqref{SM}, which is reversible in time, so that the notions of expanders and shrinkers coincide.
	For this equation, Germain, Shatah and Zeng \cite{germain-shatah-zeng} established the
	existence of ($k$-equivariant) self-similar profiles $\f : \R^2\to \S^2$.
	
	\subsection{Goals and statements of main results}
	The results of this paper aim to advance our understanding of self-similar solutions
	of the {\em one-dimensional} LLG equation.  
	In order to contextualize and motivate our results, we continue to provide further details 
	of what is known about  self-similar solutions 
	in  this context.
	
	In the 1d-case, when $\alpha=0$, \eqref{SM} is closely related to the Localized Induction Approximation (LIA), and self-similar profiles $\f: \R\to \S^2$ 
	were obtained and analyzed in \cite{vega-gutierrez,vega-gutierrez1,lakshmanan0,buttke}.
	In the context of LIA, self-similar solutions constitute a uniparametric family of smooth solutions that develop a singularity in the shape of a corner in finite time. For further work related to these solutions, including the study of their continuation after the blow-up time and their stability, we refer to the reader to \cite{banica-vega-3,banica-vega}. At the level of the Schr\"odinger map equation, these self-similar solutions provide examples of smooth solutions that develop a jump singularity in finite time.  
	
	In the general case  $\alpha\in [0,1]$,  the analytical study of self-similar expanders of the one-dimensional \eqref{LLG} was carried out in \cite{gutierrez-delaire1}. Here, it was shown that these solutions  are given by  a family of  smooth 
	profiles  $\{  \f_{c,\alpha}\}_{c,\alpha}$, and that the corresponding  expanders are  associated with a discontinuous (jump) singular initial data.
	We refer to \cite{gutierrez-delaire1,gutierrez-delaire2} for the precise statement of this result, and  the stability of these solutions, as well as the qualitative and quantitative analysis of their dynamics with respect to the parameters $c$ and $\alpha$.
	
	It is important to notice that in the presence of damping ($\alpha>0$), since 
	the LLG equation is not time-reversible, the notion of expander is different from that of shrinker.
	It is therefore natural to ask the following question:  {\em{What can be said about shrinker solutions for the one-dimensional LLG equation?}}
	
	Answering this question constitutes the main purpose of this paper.  Precisely, our main goals  are to establish the classification of  self-similar shrinkers of the one-dimensional LLG equation of the form \eqref{f-profile}
	for some profile $\f:\R\to \mathbb{S}^2$, and the analytical study  of their properties. 
	In particular, we will be especially interested in studying the dynamics of these solutions as $t$ tends to the time of singularity $T$,  and understanding  how the dynamical behavior of these solutions is affected by the presence of damping. Since, as it has been already mentioned,  the case  $\alpha=0$ has been previously considered in the literature (see \cite{banica-vega-3,gutierrez-survey}), in what follows we will assume that $\alpha\in(0,1]$.
	
	In order to state our first result, we observe that if  $\m$ 
	is a  solution to \eqref{LLG} of the form \eqref{f-profile} for some smooth 
	profile $\f$, then  
	$\f$ solves the  following system of ODEs
	\begin{equation}
	\label{eq:self-similar-intro}
	\frac{x \f'}2=\beta \f\times \f''-\alpha \f\times (\f\times \f''),
	\quad\textup{ on }\R,
	\end{equation}
	which   recasts as
	\begin{equation}
	\label{eq:EDO:intro}
	\alpha \f''+\alpha\abs{\f'}^2\f +\beta (\f\times \f')'-\frac{x \f'}2=0,
	\quad\textup{ on }\R,
	\end{equation}
	due to the fact that $\f$ takes values in $\S^2$.
	
	In the case $\alpha\in (0,1)$, it seems unlikely to be able to find explicit solutions to 
	\eqref{eq:EDO:intro}, and even their existence is not clear  (see also equation \eqref{eq-f-j}). Nevertheless, surprisingly we can establish the following rigidity result concerning the possible weak solutions to \eqref{eq:EDO:intro} (see Section~\ref{sec:EDO} for the definition of weak solution).
	\begin{thm}
		\label{thm:EDO}
		Let $\alpha\in (0,1]$. Assume that $\f$ is a weak solution to \eqref{eq:EDO:intro}. 
		Then $\f$ belongs to $\boC^\infty(\R;\S^2)$ and there exists $c\geq 0$ such that
		$\abs{\f'(x)}=ce^{\alpha x^2/4}$, for all $x\in \R$. 
	\end{thm}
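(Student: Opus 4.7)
The plan is to first upgrade any weak solution $\f$ to a smooth one by an elliptic bootstrap, and then to derive a first-order ODE for $|\f'|^2$ via two well-chosen scalar products of the equation.

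For the regularity step I would use that $|\f|\equiv 1$ forces $\f\cdot\f'=0$ and $\f\cdot\f''=-|\f'|^2$ in the distributional sense. Combined with the identity $(\f\times\f')'=\f\times\f''$, this lets me rearrange \eqref{eq:EDO:intro} as
\[
\alpha \f''+\beta\,\f\times\f''\;=\;\tfrac{x}{2}\f'-\alpha|\f'|^2\f.
\]
The pointwise linear map $v\mapsto\alpha v+\beta\,\f\times v$ is invertible on $\R^3$: it acts as multiplication by $\alpha$ on the line $\R\f$ and, since $\alpha^2+\beta^2=1$, it is inverted by $w\mapsto\alpha w-\beta\,\f\times w$ on the tangent plane $T_\f\S^2$. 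This yields an explicit algebraic expression $\f''=F(x,\f,\f')$ that is quadratic in $\f'$, from which a routine bootstrap starting at $\f\in H^1_{\rm loc}$ improves regularity successively to $\f'\in L^\infty_{\rm loc}$, then to $\f''$ continuous, and finally to $\f\in\boC^\infty(\R;\S^2)$.

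Once smoothness is in hand, I take two scalar products of \eqref{eq:EDO:intro}. Dotting with $\f'$ and using $\f\cdot\f'=0$ together with $(\f\times\f')'\cdot\f'=(\f\times\f'')\cdot\f'=-\det(\f,\f',\f'')$ gives
\[
\alpha\,\f'\cdot\f''-\beta\,\det(\f,\f',\f'')=\tfrac{x}{2}|\f'|^2.
\]
Dotting instead with $\f\times\f'$ and using $\f'\cdot(\f\times\f')=0$, $\f''\cdot(\f\times\f')=\det(\f,\f',\f'')$, and $|\f\times\f'|^2=|\f'|^2$ (because $\f\perp\f'$), produces
\[
\alpha\,\det(\f,\f',\f'')+\beta\,\f'\cdot\f''=0.
\]
Eliminating $\det(\f,\f',\f'')$ between these two identities and invoking $\alpha^2+\beta^2=1$ yields the scalar identity $\f'\cdot\f''=\tfrac{\alpha x}{2}|\f'|^2$, i.e.\ $(|\f'|^2)'=\alpha x\,|\f'|^2$. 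Integrating gives $|\f'(x)|^2=|\f'(0)|^2\,e^{\alpha x^2/2}$, and hence the announced formula with $c=|\f'(0)|\geq 0$.

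The main obstacle is the regularity step: the two algebraic testings used above must be justified from the notion of weak solution of Section~\ref{sec:EDO}, which in practice amounts to running the bootstrap carefully and verifying that $\f'$ and $\f\times\f'$ are admissible test vectors as soon as the iteration delivers $\f\in H^2_{\rm loc}$. Once this is done, the mechanism behind the clean exponential profile is the physical normalization $\alpha^2+\beta^2=1$: it is exactly this identity that couples the two scalar equations into a single first-order linear ODE for $|\f'|^2$, independently of how $\f$ itself moves on $\S^2$.
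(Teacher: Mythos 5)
Your proposal is correct, and the computation of $\abs{\f'}$ is essentially the paper's own argument in different clothing: the paper takes the cross product of $\f$ with \eqref{eq:EDO:intro}, forms $\alpha\cdot(\text{equation})+\beta\cdot(\text{crossed equation})$ using $\alpha^2+\beta^2=1$, and then dots with $\f'$, which is algebraically the same as your two scalar products with $\f'$ and $\f\times\f'$ followed by elimination of $\det(\f,\f',\f'')$; both yield $(\abs{\f'}^2)'=\alpha x\abs{\f'}^2$. Where you genuinely diverge is the regularity step. The paper writes the weak formulation as a uniformly elliptic quasilinear system, $\boA(\u)\bm\xi\cdot\bm\xi=\alpha\abs{\bm\xi}^2$ with quadratic growth of the right-hand side, and quotes Giaquinta's regularity theorem plus Schauder estimates to bootstrap to $\boC^\infty$; you instead invert the pointwise map $v\mapsto \alpha v+\beta\,\f\times v$ (the same invertibility/ellipticity observation) and run a direct one-dimensional bootstrap, which is more elementary and does work here. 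One caution: at $H^1_{\loc}$ regularity the rearrangement to $\f''=F(x,\f,\f')$ should not be performed by multiplying the distribution $\f''$ by the merely $H^1$ coefficient $\f$; rather, keep the divergence structure that the weak formulation already provides, namely $(\alpha\f'+\beta\,\f\times\f')'=\tfrac{x}{2}\f'-\alpha\abs{\f'}^2\f\in L^1_{\loc}$, deduce that $\alpha\f'+\beta\,\f\times\f'$ lies in $W^{1,1}_{\loc}\subset \boC^0$, and only then apply the bounded pointwise inverse of $\boA(\f(x))$ to this \emph{function} to conclude $\f'\in L^\infty_{\loc}$ (indeed continuous), after which your chain $\f''$ continuous, then $\f\in\boC^\infty(\R;\S^2)$, proceeds as stated. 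With that adjustment your route gives a self-contained ODE proof that avoids the elliptic-systems machinery cited in the paper, at the price of being specific to dimension one (the paper's scheme is the one that generalizes).
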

	Theorem~\ref{thm:EDO} provides a necessary condition on the possible (weak) solutions of \eqref{eq:EDO:intro}: namely the modulus of the  gradient of any  solution {\em must} be $ce^{\alpha x^2/4}$, for some $c\geq 0$. We proceed now to establish the {\em existence} of solutions satisfying this condition
	for any $c>0$ (notice that the case when $c=0$ is trivial).
	
	To this end,  we will follow a  geometric
	approach that was proven to be very fruitful in similar contexts 
	(see e.g.~\cite{lakshmanan0,lipniacki,lamb,vega-gutierrez,demontis-ortenzi}),
	including the work of the authors in the study of expanders  \cite{gutierrez-delaire1}. 
	As explained in Subsection~\ref{subsec:existence},  this approach relies on identifying  $\f$
	as the unit tangent vector $\m:=\f$ of a curve  $\bm X_{\m}$ in $\mathbb{R}^3$ parametrized by arclength. 
	Thus, assuming that $\f$ is a solution to \eqref{eq:EDO:intro}
	and using the Serret--Frenet system associated with the curve $\bm X_{\m}$, 
	we can deduce  that the curvature and the torsion are explicitly  given by \bq
	\label{curvature}
	k(x)=c e^{{\alpha x^2}/{4}}, \quad  \text{ and } \quad 
	\tau(x)=-\frac{\beta x}{2},\eq
	respectively, for some $c\geq 0$ (see~Subsection~\ref{subsec:existence} for further details).
	In particular, we have  $\abs{\m'(x)}=k(x)=c e^{{\alpha x^2}/{4}}$,
	in agreement with Theorem~\ref{thm:EDO}. 
	Conversely, given  $c\geq 0$ and  denoting $\m_{c,\alpha}$ the solution of
	the Serret--Frenet system 
	\begin{equation}\label{serret}
	\left\{
	\begin{aligned}
	\m'(x)&=k(x)\n(x),\\
	\n'(x)&=-k(x)\m(x) +\tau(x) \b(x),\\
	\b'(x)&=-\tau(x) \n(x),
	\end{aligned}
	\right.
	\end{equation}
	with curvature  and torsion as in \eqref{curvature}, 
	and   initial conditions (w.l.o.g.) 
	\begin{equation}\label{IC} {\m}(0)=(1,0,0), \quad
	{\n}(0)=(0,1,0), \quad {\b}(0)=(0,0,1),
	\end{equation}
	we obtain a solution to \eqref{eq:EDO:intro}. Moreover, we can show that the solutions constructed in this manner provide, up to symmetries, all the solutions
	to \eqref{eq:self-similar-intro}. The precise statement is the following.
	\begin{prop}
		\label{prop:unicidad}
		The set of nonconstant solutions to 
		\eqref{eq:self-similar-intro} is $\{\boR \m_{c,\alpha} : c>0,\boR \in SO(3)\}$,
		where $SO(3)$ is the group  of rotations about the origin preserving orientation.
	\end{prop}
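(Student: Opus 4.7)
The plan is to exploit Theorem~\ref{thm:EDO} to pin down the Serret--Frenet frame of $\f$ and then invoke uniqueness for a linear Cauchy problem. Fix a nonconstant solution $\f$ of \eqref{eq:self-similar-intro}. Using $|\f|^2=1$ together with the BAC--CAB identity to rewrite $\f\times(\f\times\f'')=-\f''+(\f\cdot\f'')\f$, the equations \eqref{eq:self-similar-intro} and \eqref{eq:EDO:intro} are equivalent, so Theorem~\ref{thm:EDO} guarantees $\f\in\boC^\infty(\R;\S^2)$ with $|\f'(x)|=ce^{\alpha x^2/4}$ for some $c\geq 0$. Nonconstancy forces $c>0$, so $\f'$ has no zeros on $\R$.

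Next, I would set $\m:=\f$, $\n:=\f'/|\f'|$, $\b:=\m\times\n$, which are smooth and form a positively oriented orthonormal frame. By construction $\m'=k\n$ with $k(x)=ce^{\alpha x^2/4}$, and defining $\tau$ via $\b'=-\tau\n$ gives the full Serret--Frenet system \eqref{serret}. Substituting the expressions for $\f'$, $\f''$, $\f\times\f'$ and $(\f\times\f')'$ in terms of $(\m,\n,\b)$ into \eqref{eq:EDO:intro} and projecting on the frame, the $\m$-component cancels and the remaining two components read
\[
\alpha k' - \beta k\tau - \tfrac{xk}{2}=0, \qquad \beta k'+\alpha k\tau=0.
\]
Since $k'/k=\alpha x/2$, the second relation yields $\tau(x)=-\beta x/2$, and the constraint $\alpha^2+\beta^2=1$ ensures that the first is then automatically satisfied. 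This recovers the curvature and torsion in \eqref{curvature}.

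To finish, I would choose $\boR\in SO(3)$ mapping the canonical basis to $(\m(0),\n(0),\b(0))$. Because \eqref{serret} is linear and rotation-equivariant, the triple $(\boR^{-1}\m,\boR^{-1}\n,\boR^{-1}\b)$ solves \eqref{serret} with the same $k,\tau$ and with the initial conditions \eqref{IC}; by Picard--Lindel\"of it coincides with the Frenet frame of $\m_{c,\alpha}$, so $\f=\boR\m_{c,\alpha}$. The reverse inclusion is the content of the discussion preceding the proposition (which exhibits $\m_{c,\alpha}$ as a solution of \eqref{eq:EDO:intro}, hence of \eqref{eq:self-similar-intro}), combined with the rotational invariance of \eqref{eq:self-similar-intro}.

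The main obstacle I anticipate is the frame-projection step: it produces two scalar relations for the single unknown $\tau$, and it is only thanks to the exponential form of $k$ granted by Theorem~\ref{thm:EDO} and to the identity $\alpha^2+\beta^2=1$ that they are compatible and pin down $\tau$ uniquely. Without these inputs the Frenet approach would merely give necessary conditions and no classification would follow.
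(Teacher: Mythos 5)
Your proposal is correct and follows essentially the same route as the paper: invoke Theorem~\ref{thm:EDO} for smoothness and $|\f'|=ce^{\alpha x^2/4}$ with $c>0$, pass to the Serret--Frenet frame, project \eqref{eq:EDO:intro} onto $\n$ and $\b$ to get $\alpha k'-\beta k\tau=xk/2$ and $\beta k'+\alpha k\tau=0$, hence $k=ce^{\alpha x^2/4}$, $\tau=-\beta x/2$, and conclude uniqueness up to a rotation. The only cosmetic difference is the last step: the paper applies the fundamental theorem of the local theory of space curves to the primitive curves $\bm X_{\f}$ and $\bm X_{\m_{c,\alpha}}$ and then differentiates away the translation, whereas you inline that argument by applying Picard--Lindel\"of uniqueness directly to the linear Frenet system after rotating the initial frame to \eqref{IC}; both are valid.
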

	
	The above proposition reduces the study of self-similar shrinkers to the understanding  
	of the family of self-similar shrinkers associated with the 
	profiles $\{\m_{c,\alpha}\}_{c,\alpha}$. 
	The next result summarizes 
	the properties of these solutions.
	\begin{thm}
		\label{thm-conv}
		Let $\alpha\in(0,1]$, $c>0$, $T\in \R$
		and ${\m}_{c,\alpha}$ be the solution of the Serret--Frenet
		system \eqref{serret}  with initial conditions \eqref{IC},
		$$
		k(x)=c e^{{\alpha x^2}/{4}}\qquad \text{and} \qquad \tau(x)=
		-\dfrac{\sqrt{\vphantom{\alpha^A} {1-\alpha^2}} x}{2}.
		$$
		Define
		\begin{equation} \label{def-m}
		\mm_{c, \alpha}(x,t) =\m_{c,\alpha}\left( \frac{x}{\sqrt{T-t}}  \right),   \qquad t<T.
		\end{equation}
		Then we have the following statements.
		\begin{enumerate}[label=({\roman*}),ref={\it{({\roman*})}}]
			\item\label{regular} The function $\mm_{c, \alpha}$ belongs to  $\mathcal{C}^\infty( \R\times (-\infty,T);\mathbb{S}^2)$,  solves  \eqref{LLG} for $t\in (-\infty,T)$,
			and
			$$\abs{\partial_{x} \mm_{c, \alpha}(x,t)}=\frac{c}{\sqrt{T-t}}e^{\frac{\alpha x^2}{4(T-t)}},$$
			for all $(x,t)\in \R\times(-\infty,T)$.
			
			\item\label{parity}
			The components of the profile  $\m_{c,\alpha}=(m_{1,c,\alpha},m_{2,c,\alpha},m_{3,c,\alpha})$ satisfy that $m_{1,c,\alpha}$ is even,  while
			$m_{2,c,\alpha}$ and $m_{3,c,\alpha}$ are odd.
			\item\label{asymptotics}
			There exist constants $\rho_{j,c,\alpha}\in[0,1],$
			$B_{j,c,\alpha}\in [-1,1],$
			and 
			$\phi_{j,c,\alpha} \in [0,2\pi),$
			for $j\in \{1,2,3\}$,
			such that
			we have the  following asymptotics for the profile   $\m_{c,\alpha}=(m_{1,c,\alpha},m_{2,c,\alpha},m_{3,c,\alpha})$ and its derivative:
			\begin{equation}
			\label{eq:asymp}
			\begin{aligned}
			m_{j,c,\alpha}(x)=&\rho_{j,c,\alpha}\cos(c\Phi_\alpha(x)-\phi_{j,c,\alpha})
			-\frac{\beta B_{j,c,\alpha}}{2c} xe^{-\alpha x^2/4}\\
			&+
			\frac{\beta^2 \rho_{j,c,\alpha}}{8c}\sin(c\Phi_\alpha(x)-\phi_{j,c,\alpha})
			\int_x^\infty s^2 e^{-\alpha s^2/4}ds
			+\frac{\beta}{\alpha^5c^2}\boO(x^2e^{-\alpha x^2/2}),
			\end{aligned}
			\end{equation}
			and
			\begin{equation}
			\label{eq:asymp:der}
			\begin{aligned}
			m'_{j,c,\alpha}&(x)=- c\rho_{j,c,\alpha}  \sin(c\Phi_\alpha(x)-\phi_{j,c,\alpha}) e^{\alpha x^2/4}
			\\
			&+
			\frac{\beta^2 \rho_{j,c,\alpha}}{8}\cos(c\Phi_\alpha(x)-\phi_{j,c,\alpha})
			e^{\alpha x^2/4}\int_x^\infty s^2 e^{-\alpha s^2/4}ds
			+\frac{\beta}{\alpha^5c}\boO(x^2e^{-\alpha x^2/4}),
			\end{aligned}
			\end{equation}
			for all $x\geq 1$, where 
			$$
			\Phi_\alpha(x)=\int_{0}^{x} e^{\frac{\alpha s^2}{4}}\, ds.
			$$
			Moreover, the constants   $\rho_{j,c,\alpha}$, $B_{j,c,\alpha}$, and  $\phi_{j,c,\alpha}$ satisfy the following identities
			$$
			\rho_{1,c,\alpha}^2+\rho_{2,c,\alpha}^2+\rho_{3,c,\alpha}^2=2,\quad 
			B_{1,c,\alpha}^2+B_{2,c,\alpha}^2+B_{3,c,\alpha}^2=1 \quad \text{and} \quad \rho_{j,c,\alpha}^2+B_{j,c,\alpha}^2=1,\quad  j\in \{1,2,3\}.
			$$
			
			\item\label{convergence}
			The solution $\mm_{c,\alpha}=(\mmm_{1,c,\alpha},\mmm_{2,c,\alpha},\mmm_{3,c,\alpha})$ satisfies the following pointwise convergences
			\begin{equation}
			\label{convergencia}
			\begin{aligned}
			\lim_{t\to T^-}(\mmm_{j,c,\alpha}(x,t)-\rho_{j,c,\alpha}\cos\big(  c\, \Phi_{\alpha} \big( \frac{x}{\sqrt{T-t}}\big) -\phi_{j,c,\alpha}  \big)=0, \text{ if } x>0,\\
			\lim_{t\to T^-}(\mmm_{j,c,\alpha}(x,t)-\rho_{j,c,\alpha}^{-}\cos\big(  c\, \Phi_{\alpha} \big( \frac{-x}{\sqrt{T-t}}\big) -\phi_{j,c,\alpha}  \big)=0, \text{ if }x<0,
			\end{aligned}
			\end{equation}
			for $j\in \{1,2,3\}$, where $\rho_{1,c,\alpha}^-=\rho_{1,c,\alpha}$, 
			$\rho_{2,c,\alpha}^-=-\rho_{2,c,\alpha}$ and $\rho_{3,c,\alpha}^-=-\rho_{3,c,\alpha}$.
			\item\label{IVP} For any  $\vp\in W^{1,\infty}(\R;\R^3)$, we have
			\begin{equation*}
			\lim_{t\to T^-}\int_\R \mm_{c,\alpha}(x,t)\cdot \vp(x)dx=0.
			\end{equation*}
			In particular, $\mm_{c,\alpha}(\cdot, t)\to 0$ as $t\to T^-$, as a tempered  distribution.
		\end{enumerate}
	\end{thm}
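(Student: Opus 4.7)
The plan is to prove the five items in the order listed, with (iii) being the technical core and the other items following from it or from elementary symmetry and oscillation arguments.

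For (i) and (ii): since $k(x)=ce^{\alpha x^2/4}$ and $\tau(x)=-\beta x/2$ are smooth, classical ODE theory applied to \eqref{serret} yields a smooth frame $(\m,\n,\b)$, orthonormal for every $x\in\R$, so $\m_{c,\alpha}\in\boC^\infty(\R;\S^2)$. The equivalence between \eqref{serret} and \eqref{eq:self-similar-intro}--\eqref{eq:EDO:intro} recalled in Subsection~\ref{subsec:existence} shows that $\m_{c,\alpha}$ solves the self-similar profile equation, so by the scaling invariance of \eqref{LLG} the function $\mm_{c,\alpha}(x,t)=\m_{c,\alpha}(x/\sqrt{T-t})$ solves \eqref{LLG} on $\R\times(-\infty,T)$, and $|\partial_x\mm_{c,\alpha}(x,t)|=k(x/\sqrt{T-t})/\sqrt{T-t}$ follows from $|\m'|=k$ and the chain rule. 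For (ii), I would define the reflected frame
\[
\tilde\m(x):=(m_1(-x),-m_2(-x),-m_3(-x)),\ \tilde\n(x):=(-n_1(-x),n_2(-x),n_3(-x)),\ \tilde\b(x):=(-b_1(-x),b_2(-x),b_3(-x)),
\]
and check by direct differentiation that $(\tilde\m,\tilde\n,\tilde\b)$ satisfies \eqref{serret} with the same $k$ (even) and $\tau$ (odd) and the same initial data \eqref{IC}; uniqueness then forces $\tilde\m=\m$, which is exactly the claimed parity of the three components.

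For the main item (iii), the strategy is to recast \eqref{serret} as a complex ODE amenable to stationary-phase/oscillatory-integral analysis. Introduce the Hasimoto-type unknown $H(x):=(\n(x)+i\b(x))\,e^{-i\beta x^2/4}$, which satisfies $|H|^2=2$, $H\cdot\m=0$, and, by direct computation from \eqref{serret},
\[
H'(x)=-c\,e^{(\alpha-i\beta)x^2/4}\,\m(x),\qquad \m'(x)=\Re\bigl(c\,e^{(\alpha+i\beta)x^2/4}\,H(x)\bigr).
\]
Since $k(x)$ blows up like $e^{\alpha x^2/4}$, the natural fast phase is $c\Phi_\alpha(x)$. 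The elementary identity
\[
c\,e^{(\alpha-i\beta)x^2/4}e^{\pm ic\Phi_\alpha(x)}=\mp i\,\frac{d}{dx}\bigl(e^{-i\beta x^2/4}e^{\pm ic\Phi_\alpha(x)}\bigr)\pm\tfrac{\beta x}{2}\,e^{-i\beta x^2/4}e^{\pm ic\Phi_\alpha(x)}
\]
lets one integrate $H'(x)$ by parts, yielding a bounded oscillatory leading term for $H$ with remainder of size $xe^{-\alpha x^2/4}$. Substituting back into $\m'$ and integrating once more produces the leading oscillation $\rho_j\cos(c\Phi_\alpha-\phi_j)$; a second integration by parts against the slow factor $xe^{-\alpha x^2/4}$ extracts the $-\tfrac{\beta B_j}{2c}xe^{-\alpha x^2/4}$ correction, and a third iteration generates the $\tfrac{\beta^2\rho_j}{8c}\sin(\cdot)\int_x^\infty s^2e^{-\alpha s^2/4}ds$ term and the $\boO(x^2e^{-\alpha x^2/2})$ remainder in \eqref{eq:asymp}; differentiating the expansion and bootstrapping gives \eqref{eq:asymp:der}. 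The numbers $\rho_{j,c,\alpha},\phi_{j,c,\alpha},B_{j,c,\alpha}$ arise as the limits at $+\infty$ of bounded slowly-varying auxiliary functions produced by this scheme, and the algebraic identities $\sum_j\rho_j^2=2$, $\sum_j B_j^2=1$, $\rho_j^2+B_j^2=1$ are obtained by plugging \eqref{eq:asymp}--\eqref{eq:asymp:der} into the conserved relations $|\m|^2=1$, $|\n|^2+|\b|^2=2$, $\m\cdot\n=0$ and matching the coefficients of $1,\cos(2c\Phi_\alpha),\sin(2c\Phi_\alpha)$ in the limit $x\to\infty$.

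Items (iv) and (v) then follow from (iii) by direct manipulation. For (iv), fix $x>0$ and set $y:=x/\sqrt{T-t}\to+\infty$ as $t\to T^-$: every correction term in \eqref{eq:asymp} is bounded by $Cye^{-\alpha y^2/4}$ and hence vanishes in the limit, leaving only the cosine with argument $c\Phi_\alpha(y)$, which gives \eqref{convergencia}; the case $x<0$ follows by combining with (ii). For (v), the rescaling $y=x/\sqrt{T-t}$ yields
\[
\int_\R\mm_{c,\alpha}(x,t)\cdot\vp(x)\,dx=\sqrt{T-t}\int_\R\m_{c,\alpha}(y)\cdot\vp(y\sqrt{T-t})\,dy.
\]
Split the $y$-integral at $|y|\leq R$ (contribution bounded by $2R\sqrt{T-t}\,\|\vp\|_\infty$) and, on the tail, substitute \eqref{eq:asymp} and write $\cos(c\Phi_\alpha(y)-\phi_j)=\frac{1}{c\Phi_\alpha'(y)}\frac{d}{dy}\sin(c\Phi_\alpha(y)-\phi_j)$; integrating by parts against the Lipschitz function $\vp(y\sqrt{T-t})$ and using $1/\Phi_\alpha'(y)=e^{-\alpha y^2/4}$ together with $\|\vp'\|_\infty<\infty$ produces boundary and bulk terms of size $\boO(\sqrt{T-t})$, so the integral vanishes as $t\to T^-$. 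The main difficulty throughout is (iii): although the integration-by-parts scheme is structurally clear, the super-exponential blow-up of $|\m'|$ makes the system stiff, and one must carefully track the two independent correction scales $xe^{-\alpha x^2/4}$ and $\int_x^\infty s^2e^{-\alpha s^2/4}ds$ through several iterations, as well as verify the three algebraic constraints among $\rho_{j,c,\alpha}$ and $B_{j,c,\alpha}$ by matching coefficients in the limit.
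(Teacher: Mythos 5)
Your treatment of items (i), (ii), (iv) and (v) is essentially the paper's: smoothness and the modulus of $\partial_x\mm_{c,\alpha}$ from ODE theory and the chain rule, parity from the reflection symmetry of the Frenet system with $k$ even and $\tau$ odd, (iv) by sending $y=x/\sqrt{T-t}\to\infty$ in the expansion, and (v) by rescaling and one integration by parts using $e^{-\alpha y^2/4}=1/\Phi_\alpha'(y)$ together with $\|\vp'\|_{\infty}<\infty$; the algebraic identities among $\rho_{j,c,\alpha}$ and $B_{j,c,\alpha}$ from the conserved relations of the frame are also in the spirit of the paper. Your identities for $H=(\n+i\b)e^{-i\beta x^2/4}$, namely $H'=-ce^{(\alpha-i\beta)x^2/4}\m$ and $\m'=\Re\bigl(ce^{(\alpha+i\beta)x^2/4}H\bigr)$, are correct.

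However, the core item (iii) has a genuine gap: your bootstrap never gets started. To integrate $H'=-ce^{(\alpha-i\beta)x^2/4}\m$ by parts against the phases $e^{\pm ic\Phi_\alpha}$ via your identity, you must already know that $\m$ oscillates like $e^{\mp ic\Phi_\alpha}$ with slowly varying amplitude --- which is precisely the leading statement of \eqref{eq:asymp}; boundedness of $\m$ alone does not even give meaning to the resulting improper integrals, since the integrand grows like $e^{\alpha x^2/4}$. Moreover $H$ mixes the slow direction $\b$ with the fast direction $\n$, so $H$ neither converges nor has a single-phase leading term, and the claimed ``bounded oscillatory leading term for $H$ with remainder $xe^{-\alpha x^2/4}$'' cannot hold as stated. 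The way the paper initializes the iteration is to diagonalize: the slow scalar $\gb$ satisfies $\gb'=\tfrac{\beta x}{2c}e^{-\alpha x^2/4}\gm'$, which is integrated by parts using only the boundedness of $\gm$, giving $\gb\to B_{c,\alpha}$ at rate $xe^{-\alpha x^2/4}$; the fast scalar $\gw=\gm+i\gn$ satisfies $\bigl(e^{ic\Phi_\alpha}\gw\bigr)'=-i\tfrac{\beta x}{2}\,\gb\, e^{ic\Phi_\alpha}$, and writing $e^{ic\Phi_\alpha}=-\tfrac{i}{c}\bigl(e^{ic\Phi_\alpha}\bigr)'e^{-\alpha x^2/4}$ and integrating by parts uses only the boundedness of $\gb$ and $\gn$, yielding $e^{ic\Phi_\alpha}\gw\to W_{c,\alpha}$ with the same rate; only then do the oscillation lemmas (the analogues of your stationary-phase step, applied to $\int_x^\infty se^{i\sigma\Phi_\alpha}$ and Gaussian-weighted variants) produce the explicit corrections $-\tfrac{\beta B_j}{2c}xe^{-\alpha x^2/4}$, $\tfrac{\beta^2\rho_j}{8c}\sin(\cdot)\int_x^\infty s^2e^{-\alpha s^2/4}ds$ and the $\boO$-terms. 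Finally, \eqref{eq:asymp:der} should not be obtained by ``differentiating the expansion'' (differentiation of asymptotic expansions is not justified); it follows directly from $m_j'=ce^{\alpha x^2/4}n_j$ together with the asymptotics of $n_j=\Im\gw_j$, which the scheme provides at no extra cost.
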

	
	\medskip
	
	\noindent It is important to remark that Theorem~\ref{thm-conv} provides examples of (smooth) solutions to the 1d-LLG equation that blow up in finite time. In order to see this, 
	let us first recall that the existence of smooth solutions to \eqref{LLG} on short times 
	can be established as in the case of the heat flow for harmonic maps \cite{lin}, 
	using that \eqref{LLG} is a strongly parabolic system \cite{guo-hong,amann}. In particular, in the one-dimensional case, for any initial condition $\mm^0 \in \boC^\infty(\R,\S^2)$, there exists a maximal time 
	$0< T_{\max}\leq\infty$ such that \eqref{LLG} admits a unique, smooth solution 
	$\mm \in \boC^\infty(\R\times [0,T_{\max});\S^2)$. Moreover, if $T_{\max}<\infty$, then 
	$$\lim_{t\to T_{\max}^-}\norm{\partial_{x}\mm(\cdot,t)}_{L^\infty(\R)}=\infty.$$
	Next, observe that for any $c>0$ and $T\in \R$, the solution of the initial value problem associated with  \eqref{LLG} and  with  initial condition 
	$\m_{c,\alpha}(\cdot)$ at time $T-1$ is given  by
	$\mm_{c,\alpha}$ in Theorem~\ref{thm-conv}, for $t\in [T-1,T)$, and  blows up  at time $T$.
	Indeed, from \ref{regular} in Theorem~~\ref{thm-conv} , we have that 
	$$ \lim_{t\to T^-}\abs{\partial_{x} \mm_{c, \alpha}(x,t)}=
	\lim_{t\to T^-}
	\frac{c}{\sqrt{T-t}}e^{\frac{\alpha x^2}{4(T-t)}}=\infty,
	$$
	for $c>0$ and for all $x\in \R$.
	
	Notice also that from the asymptotics in part \ref{asymptotics}  and the symmetries of the profile established in part \ref{parity}, we obtain a precise description of the fast oscillating nature of the blow up of the solution \eqref{def-m} given in Theorem~\ref{thm-conv}. In this setting, we observe  that part \ref{asymptotics} of the above theorem provides the asymptotics of the profile 
	$\m_{c,\alpha}$ at infinity, in terms of a  fast oscillating principal part, plus some exponentially decaying terms. Notice that for the integral term in \eqref{eq:asymp},
	we have  (see e.g.\ \cite{abram})
	\bqq
	\int_x^\infty s^2 e^{-\alpha s^2/4}ds
	=\frac{2 x e^{-\alpha x^2/4} }{\alpha}\Big(
	1+\frac{2}{\alpha x^2}-\frac4{\alpha^2x^4}+\cdots
	\Big),
	\quad  \text{ as }x\to \infty,
	\eqq 
	and that using the asymptotics for the Dawson's integral \cite{abram}, 
	we also get 
	$$\Phi_{\alpha}(x)=\frac{2  e^{\alpha x^2/4} }{\alpha x}\Big(
	1+\frac{2}{\alpha x^2}+\frac{12}{\alpha^2x^4}+\cdots
	\Big),
	\quad  \text{ as }x\to \infty.
	$$ 
	It is also important to mention that  the big-$\boO$ in the asymptotics \eqref{eq:asymp} does not depend on the parameters, i.e.~there exists a universal constant $C>0$, such that 
	the big-$\boO$ in \eqref{eq:asymp} satisfies
	$$\abs{\boO(x^2e^{-\alpha x^2/2})}\leq C  x^2e^{-\alpha x^2/2},\quad \text{ for all }x\geq 1.$$ 
	In this manner, the constants multiplying the big-$\boO$ are meaningful and in particular,  big-$\boO$  
	vanishes when $\beta=0$ (i.e.\ $\alpha=1$).  
	
	In Figure~\ref{fig-curvas} we have depicted the profile $\m_{c, \alpha}$ 
	for $\alpha=0.5$ and $c=0.5$, where we can see their oscillating behavior. Moreover, the plots in Figure~\ref{fig-curvas} suggest  that the 
	limit sets of the trajectories are great circles on the sphere $\S^2$ when $x\to\pm\infty$. 
	This is indeed the case.  In our last result  we establish  analytically that $\m_{c,\alpha}$ 
	oscillates in a plane passing through the origin whose normal vector is given by 
	$\bm B_{c,\alpha}^{+}=(B_{1,c,\alpha},B_{2,c,\alpha},B_{3,c,\alpha})$, and $\bm B_{c,\alpha}^{-}=(-B_{1,c,\alpha},B_{2,c,\alpha},B_{3,c,\alpha})$  as  $x\to +\infty$ and  $x\to -\infty$, respectively.
	\begin{figure}[h]
		\begin{subfigure}[b]{0.33\textwidth}
			\begin{overpic}[scale=0.5]{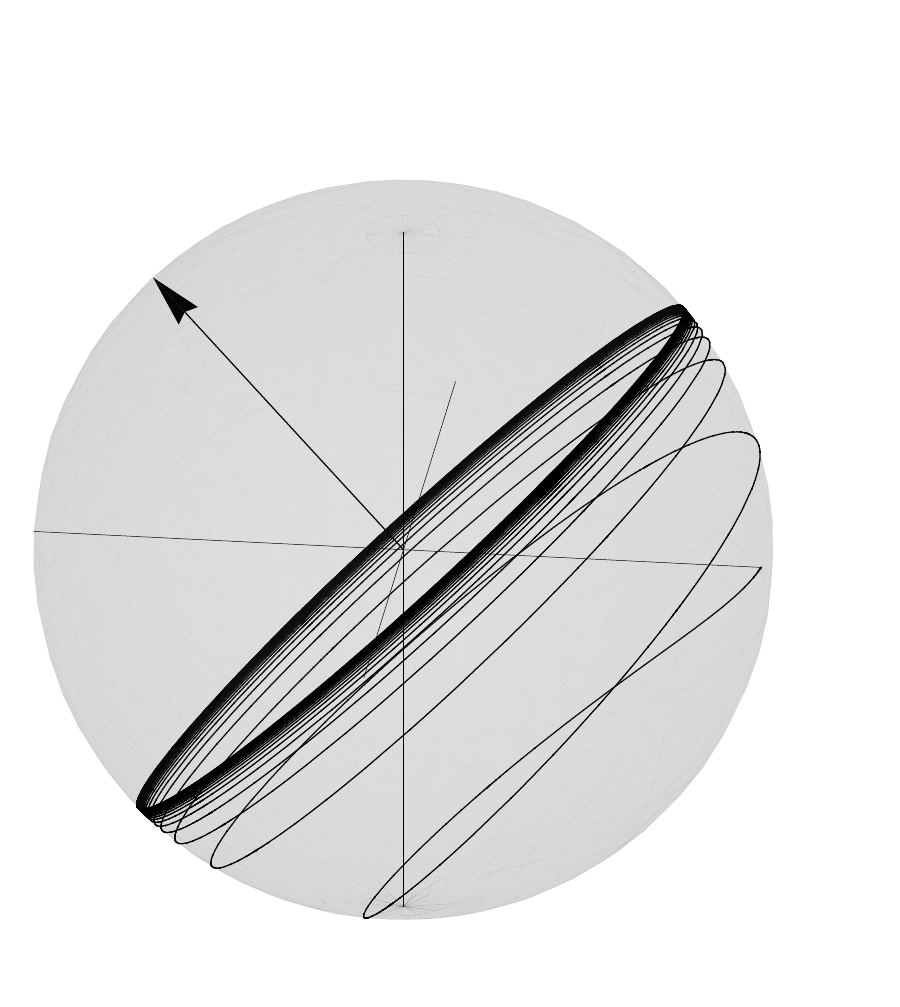}
				\put(79,44){\footnotesize{$m_1$}}
				\put(48,64){\footnotesize{$m_2$}}
				\put(40,85){\footnotesize{$m_3$}}
				\put(10,74.5){\footnotesize{$\bm B_{c,\alpha}^+$}}
			\end{overpic}
		\end{subfigure}%
		\begin{subfigure}[b]{0.33\textwidth}
			\begin{overpic}[scale=0.5]{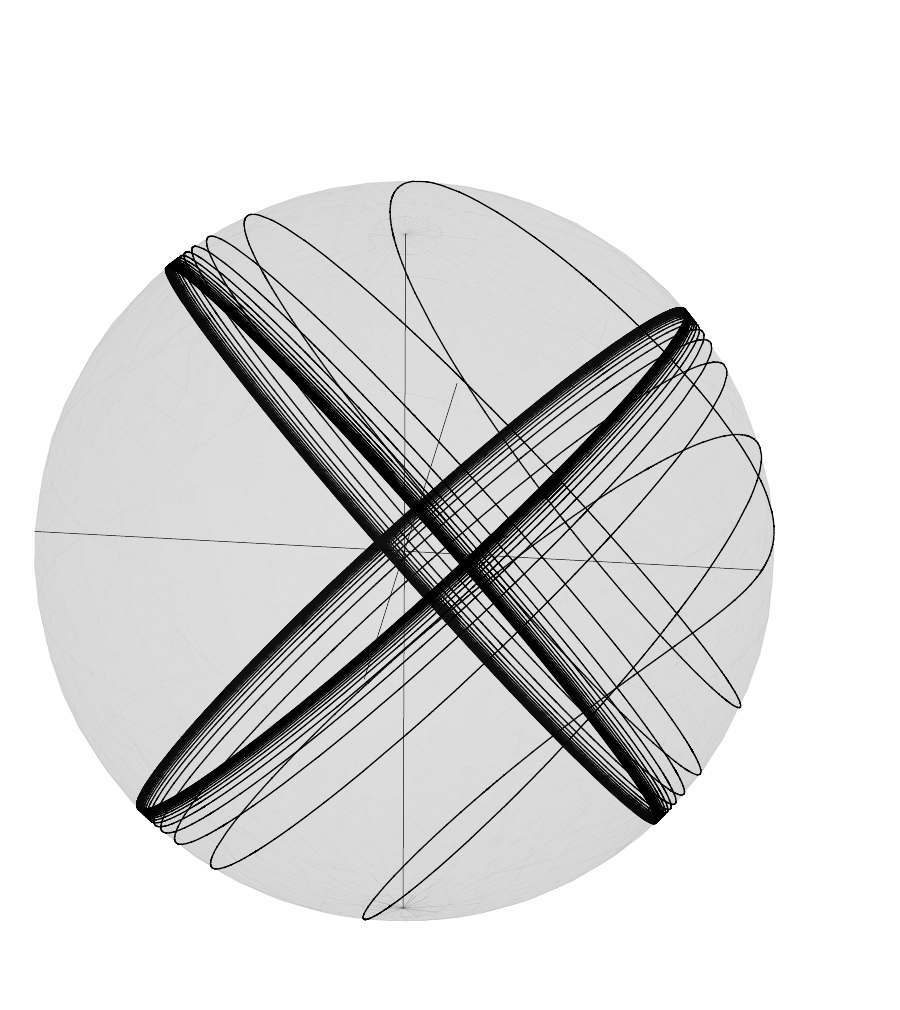}
				\put(79,44){\footnotesize{$m_1$}}
				\put(48,64){\footnotesize{$m_2$}}
				\put(40,85){\footnotesize{$m_3$}}
			\end{overpic}
		\end{subfigure}%
		\begin{subfigure}[b]{0.33\textwidth}
			\begin{overpic}[scale=0.5]{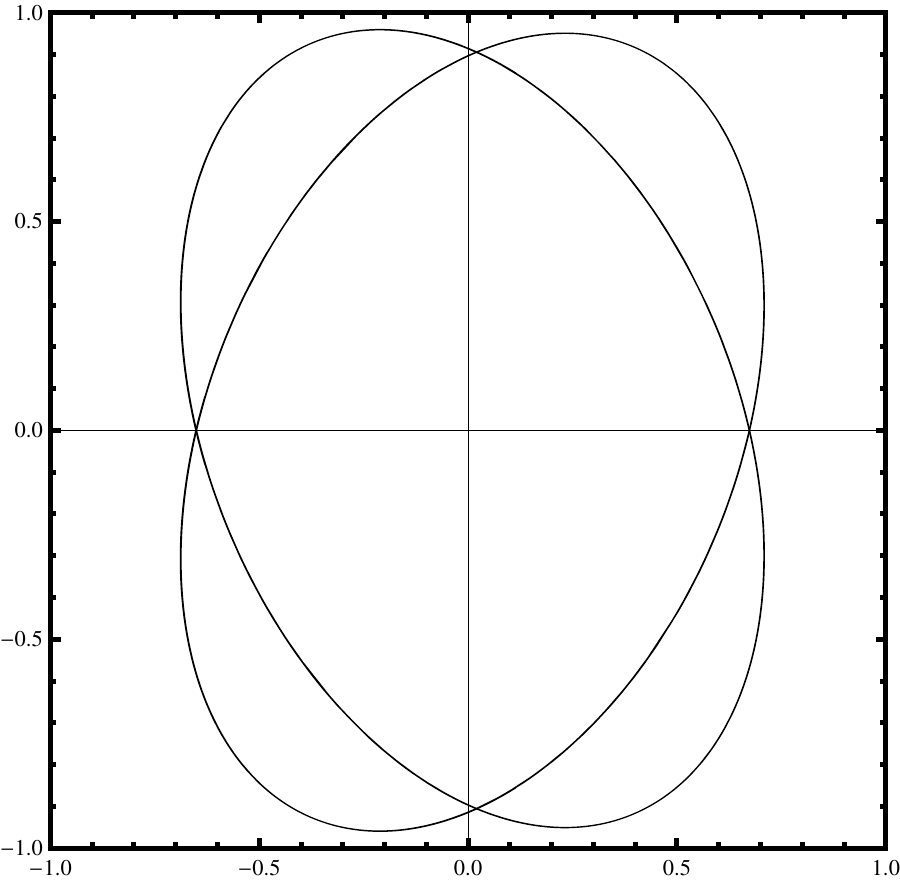}
				\put(49,-4){\footnotesize{$m_1$}}
				\put(-9,49){\footnotesize{$m_2$}}
			\end{overpic}
		\end{subfigure}%
		\caption{Profile $\m_{c,\alpha}$ for $c=0.5$ and $\alpha=0.5$. The figure on the left depicts profile for $x\in \R^+$ and the normal vector $\bm B_{c,\alpha}^{+}\approx (-0.72, -0.3, 0.63)$. The figure on the center  shows  the profile for $x\in\R$; the angle between the circles $\boC_{c,\alpha}^\pm$ is  $\vartheta_{c,\alpha}\approx 1.5951$.
			The figure on the right represents the projection of limit cycles $\boC_{c,\alpha}^\pm$ on the plane.}
		\label{fig-curvas}
	\end{figure}
	%
	\begin{thm}
		\label{plane}
		Using the constants given in Theorem~\ref{thm-conv}, let  $\boP_{c,\alpha}^{\pm}$ be 
		the planes passing through the origin with normal vectors $\bm B_{c,\alpha}^{+}$ and  $\bm B_{c,\alpha}^{-}=(-B_{1,c,\alpha},B_{2,c,\alpha},B_{3,c,\alpha})$,
		respectively.
		Let
		$\boC_{c,\alpha}^\pm$ be the circles in $\R^3$ given by
		the intersection of these planes with the sphere, i.e.\  
		$\boC_{c,\alpha}^\pm=\boP_{c,\alpha}^{\pm}\cap \S^2.$
		Then the following statements hold.
		\begin{enumerate}[label=({\roman*}),ref={\it{({\roman*})}}]
			\item 
			For all $\abs{x}\geq 1$, we have
			\begin{equation}
			\label{dist1}
			\dist(\m_{c,\alpha}(x),\boC_{c,\alpha}^{\pm})\leq \frac{30\sqrt{2}\beta}{c \alpha^2 }\abs{x}e^{-\alpha x^2/4}.
			\end{equation}
			In particular
			\begin{equation}
			\label{dist2}
			\begin{aligned}
			\lim_{t\to T^-}\dist(\mm_{c,\alpha}(x,t),\boC_{c,\alpha}^+)=0, \quad \text{ if }x>0,\\
			\lim_{t\to T^-}\dist(\mm_{c,\alpha}(x,t),\boC_{c,\alpha}^-)=0, \quad \text{ if }x<0.
			\end{aligned}
			\end{equation}
			\item
			Let $\vartheta_{c,\alpha}=\arccos(1-2B_{1,c,\alpha}^2)$ be the angle between the circles $\boC_{c,\alpha}^\pm$.
			For  $c\geq \beta \sqrt{\pi}/\sqrt{\alpha}$, we have
			\begin{equation}
			\label{thm:est-angulo}
			\vartheta_{c,\alpha}\geq \arccos\left( -1+\frac{2\pi\beta^2}{c^2\alpha}\right).
			\end{equation}
			In particular
			\begin{equation}
			\label{thm:limit-angle-c-infty}
			\lim_{c\to\infty}  \vartheta_{c,\alpha}=\pi, \ \textup{ for all }\alpha\in (0,1],
			\quad { and }\quad 
			\lim_{\alpha \to 1}  \vartheta_{c,\alpha}=\pi, 
			\ \textup{ for all }c >0.
			\end{equation}
		\end{enumerate}
	\end{thm}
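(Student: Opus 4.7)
The plan for part (i) is to reduce the distance bound to an inner-product bound. For any $\m\in\S^2$ and any great circle $\boC=\boP\cap\S^2$ with $\boP$ a plane through the origin having unit normal $\bm B$, a direct spherical computation yields $\dist(\m,\boC)^2=2\bigl(1-\sqrt{1-(\m\cdot\bm B)^2}\bigr)\leq 2(\m\cdot\bm B)^2$, so that $\dist(\m,\boC)\leq\sqrt{2}\,|\m\cdot\bm B|$. My plan is therefore to prove an estimate of the form $|\m_{c,\alpha}(x)\cdot\bm B_{c,\alpha}^+|\leq \frac{15\beta}{c\alpha^2}|x|e^{-\alpha x^2/4}$ for $x\geq 1$; the case $x\leq -1$ with $\bm B^-$ will follow from the parity statement~\ref{parity} together with the identity $\m_{c,\alpha}(-x)\cdot\bm B^-=-\m_{c,\alpha}(x)\cdot\bm B^+$, which is immediate from $\bm B^-=(-B_1,B_2,B_3)$ and the parities of $m_1,m_2,m_3$.

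To obtain this inner-product bound I would substitute the full asymptotic \eqref{eq:asymp} into $\sum_j B_j m_j$. The key algebraic observation is that the principal oscillating term $\rho_j\cos(c\Phi_\alpha-\phi_j)$ parametrizes the limit circle $\boC^+$ inside $\boP^+$: expanding via $\cos(c\Phi_\alpha-\phi_j)=\cos\phi_j\cos(c\Phi_\alpha)+\sin\phi_j\sin(c\Phi_\alpha)$, the two vectors $(\rho_j\cos\phi_j)_j$ and $(\rho_j\sin\phi_j)_j$ form an orthonormal basis of $\boP^+$ and are therefore orthogonal to $\bm B^+$; this gives exactly $\sum_j B_j\rho_j\cos\phi_j=\sum_j B_j\rho_j\sin\phi_j=0$. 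Consequently both the leading $\cos$-contribution and the $\sin$-type third term in \eqref{eq:asymp} vanish identically under $\sum_j B_j(\cdot)$. Using $\sum_j B_j^2=1$, the only surviving piece is the non-oscillating drift $-\frac{\beta}{2c}xe^{-\alpha x^2/4}$, to which one adds the big-$\boO$ remainder of size $C\beta x^2 e^{-\alpha x^2/2}/(\alpha^5 c^2)$. Absorbing the latter using $x^2e^{-\alpha x^2/2}\leq |x|e^{-\alpha x^2/4}\sqrt{2/(\alpha e)}$ and collecting constants produces \eqref{dist1}, and \eqref{dist2} follows by inserting $x/\sqrt{T-t}$ into \eqref{dist1} and using $|z|e^{-\alpha z^2/4}\to 0$ as $|z|\to\infty$.

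For part (ii), since $\arccos$ is decreasing, the claimed inequality \eqref{thm:est-angulo} is equivalent, via $\rho_1^2+B_1^2=1$, to the upper bound $\rho_{1,c,\alpha}^2\leq \pi\beta^2/(c^2\alpha)$. My plan is to represent $\rho_1$ (more precisely, the complex combination $\rho_1 e^{i\phi_1}$) as an oscillatory integral built from $m_1$, $k$ and $\tau$, and then to control it using the Gaussian moment $\int_0^\infty e^{-\alpha s^2/4}\,ds=\sqrt{\pi/\alpha}$, whose square supplies exactly the factor $\pi/\alpha$ in the target estimate. A convenient tool would be the complex variable $\xi(x)=n_1(x)+ib_1(x)$: by Serret--Frenet it satisfies the linear ODE $\xi'+i\tau\xi=-km_1$ with $\xi(0)=0$, and hence integrates explicitly to $\xi(x)=-c\,e^{i\beta x^2/4}\int_0^x m_1(s)\,e^{(\alpha-i\beta)s^2/4}\,ds$; matching the $x\to\infty$ behavior of this representation against the leading cosine in \eqref{eq:asymp} should extract an identity for $\rho_1e^{i\phi_1}$. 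The hard part will be the conditional convergence and rapid oscillations of this integral at infinity, which I expect to handle by integration by parts against the non-vanishing phase derivative $ce^{\alpha s^2/4}\pm\beta s/2$---non-vanishing precisely thanks to the hypothesis $c\geq \beta\sqrt{\pi/\alpha}$. Once the oscillations have been cancelled, the Gaussian weight can be isolated and the trivial bound $|m_1|\leq 1$ will close the estimate. Finally, \eqref{thm:limit-angle-c-infty} follows immediately from \eqref{thm:est-angulo}: as $c\to\infty$ with $\alpha$ fixed, or as $\alpha\to 1$ (so that $\beta\to 0$), the quantity $\pi\beta^2/(c^2\alpha)\to 0$, so the lower bound approaches $\arccos(-1)=\pi$ and forces $\vartheta_{c,\alpha}\to\pi$.
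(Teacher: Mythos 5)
Your part~(i) follows the paper's route (reduce $\dist(\m,\boC^{+})\leq\sqrt2\,\abs{\m\cdot\bm B^{+}}$, then kill the oscillatory part of $\m\cdot\bm B^{+}$), but the crux is asserted rather than proved: you justify $\sum_j B_j\rho_j\cos\phi_j=\sum_j B_j\rho_j\sin\phi_j=0$ by saying that $(\rho_j\cos\phi_j)_j$ and $(\rho_j\sin\phi_j)_j$ ``form an orthonormal basis of $\boP^{+}$'', which is circular, since $\boP^{+}$ is defined through $\bm B^{+}=\lim_{x\to\infty}\b(x)$ and the statement that the limiting oscillation lies in that plane is exactly what must be shown. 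The paper proves this as identity \eqref{iden2} of Lemma~\ref{lem-constants}, by passing to the limit in the exact Serret--Frenet relation $(\m+i\n)\cdot\b=0$ using the asymptotics; some such argument is indispensable. There is also a quantitative flaw in your bookkeeping: the remainder in \eqref{eq:asymp} is $\frac{\beta}{\alpha^5c^2}\boO(x^2e^{-\alpha x^2/2})$, and a term of size $\beta/(\alpha^{5}c^{2})$ cannot be absorbed into $C\beta/(c\alpha^{2})$ uniformly in $c$ and $\alpha$ (for small $c$ the $c^{-2}$ term dominates), so your route does not yield the stated constant in \eqref{dist1}. The paper avoids this by using the componentwise estimate \eqref{est-w} (Corollary~\ref{cor-facil}), whose error is already $10\beta/(c\alpha^{2})\,xe^{-\alpha x^{2}/4}$, before the $\sqrt2$ plane-to-circle step; your treatment of $x\leq-1$ via parity and of \eqref{dist2} is fine.

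Part~(ii) is where the proposal genuinely fails: the quantity that can be controlled is $B_{1,c,\alpha}$, not $\rho_{1,c,\alpha}$. Reading the display $\vartheta_{c,\alpha}=\arccos(1-2B_{1}^2)$ literally, you convert \eqref{thm:est-angulo} into $\rho_{1}^2\leq\pi\beta^2/(c^2\alpha)$, but this inequality is false: at $\alpha=1$ one has $\rho_{1,c,1}=1$ while the right-hand side vanishes, and more generally $B_{1}\to0$ (hence $\rho_{1}\to1$) as $c\to\infty$ or $\alpha\to1$; indeed your ``immediate'' deduction of \eqref{thm:limit-angle-c-infty} would force $\rho_{1}\to0$, contradicting this. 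The convention actually used in Section~\ref{sec-limit-circle} is $\vartheta_{c,\alpha}=\arccos(2B_{1}^2-1)$, so the inequality to prove is the smallness bound $B_{1}^2\leq\pi\beta^2/(c^2\alpha)$, and it has a two-line proof (Lemma~\ref{lem:ang-c}): evaluate \eqref{b-asymp} at $x=0$, use $b_{1}(0)=0$, $\abs{m_1}\leq1$ and $\int_0^\infty(1+\tfrac{\alpha s^2}{2})e^{-\alpha s^2/4}\,ds=2\sqrt{\pi/\alpha}$; the hypothesis $c\geq\beta\sqrt{\pi}/\sqrt{\alpha}$ enters only to guarantee that the argument of $\arccos$ is $\geq-1$, not to make any phase nonvanishing. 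Your oscillatory-integral scheme for $\xi=n_1+ib_1$ is therefore aimed at the wrong quantity, and in any case is only a sketch (``should extract an identity'', ``I expect to handle''), so part~(ii) cannot close as written.
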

	The above theorem above  establishes the  convergence  of the limit sets of the trajectories of the profile  $\m_{c,\alpha}$ to the great circles $\boC_{c,\alpha}^\pm$ as shown in Figure~\ref{fig-curvas}. Moreover, \eqref{dist1}
	gives us an exponential rate for this convergence.
	In terms of the solution $\mm_{c,\alpha}$ to the LLG equation, Theorem~\ref{plane} provides a more precise 
	geometric information about the way that the solution blows up at time $T$,  as seen in \eqref{dist2}. The existence of limit circles for related ferromagnetic models have been 
	investigated for instance in \cite{waldner,broggi-meier} but to the best of our knowledge, 
	this is  the first time that this type  of phenomenon  has been observed  for the LLG equation.
	In Figure~\ref{fig-curvas} can see that   $\vartheta_{c,\alpha}\approx 1.5951$ for $\alpha=0.5$
	and $c=0.5$, where we have chosen the value of $c$ such that the angle is close to $\pi/2$.
	
	Finally, \eqref{thm:est-angulo}  and \eqref{thm:limit-angle-c-infty} in Theorem~\ref{plane} provide
	some geometric information 
	about behavior of the limit circles with respect to the parameters $c$ and $\alpha$.
	In particular,  formulae \eqref{thm:limit-angle-c-infty} states that the angle between the limiting circles $\boC_{c,\alpha}^+$  and $\boC_{c,\alpha}^-$  is $\pi$ as $c\to\infty$, for fixed $\alpha\in(0,1]$, and the same happens as $\alpha\to 1$, for fixed $c>0$. 
	In other words, in these two cases the circles $\boC_{c,\alpha}^\pm$ are the same (but differently oriented).

	\subsection{Comparison with the limit cases ${\bm{\alpha=0}}$ and ${\bm{\alpha=1}}$}
	It is well known that
	the Serret--Fenet system can be written as a second-order differential equation. For instance,
	if $(\m,\n,\b)=(m_j,n_j,b_j)_{j=1}^3$ is a solution of \eqref{curvature}--\eqref{serret}, using Lemma~3.1 in \cite{gutierrez-delaire1}, we have that new variable
	\bqq
	g_j(s)=e^{\frac12\int_0^sk(\sigma)\eta_j(\sigma)\,d\sigma}, \quad
	\text{ with }\quad \eta_j(x)=\frac{n_j(x)+i b_j(x)}{1+m_j(x)},
	\eqq satisfies the equation, for $j\in\{1,2,3\}$,
	\bq
	\label{eq-f-j}
	g_j''(x)-\frac{x}2 (\alpha+i\beta) g_j'(x)+ \frac{c^2}{4} e^{\alpha x^2/2}g_j(x)=0. 
	\eq
	Then, in the case  $\alpha=1$,  it easy to check (see also Remark~\ref{rem-alpha-1}) that
	the profile is explicitly given by the plane curve 
	\begin{align}
	\label{solution-alpha-1}
	\m_{c,1}(x)&=(\cos(c\Phi_1(x)),\sin(c\Phi_1(x)), 0).
	\end{align}
	In particular, we see  that the asymptotics in Theorem~\ref{thm-conv} are satisfied with
	\begin{gather*}
	\rho_{1,c,1}=1, \quad  \rho_{2,c,1}=1, \quad \rho_{3,c,1}=0,\quad 
	\phi_{1,c,1}=0, \quad \phi_{2,c,1}=3\pi/2,\quad \phi_{3,c,1}\in [0,2\pi).
	\end{gather*}
	
	The case $\alpha=0$ is more involved, but 
	using \eqref{eq-f-j}, the solution $\{\m_{c,0},\n_{c,0},\b_{c,0}\}$ of the system \eqref{serret}
	can still be  explicitly determined in terms  of confluent hypergeometric functions. 
	This  leads to the asymptotics
	\cite{vega-gutierrez,gutierrez-delaire1,gamayun-lisovyy}
	\begin{align}
	\label{solution-alpha-0}
	\m_{c,0}(x)&=\bm A_{c}-  \frac{2c}{x}\bm B_{c}\cos \left( \frac{x^2}{4}+c^2\ln(x)+\frac{\pi}2 \right)
	+O\left( \frac1{x^2} \right),
	\end{align}
	as $x\to\infty$, for some vectors $\A_{c}\in \S^2$ and $\bm B_{c}\in \R^3$.
	In particular, we see that $\m_{c,0}(x)$ converges to some vector $\A_{c}$,  as $x\to\infty$.
	Hence, there is a drastic change  in the behavior of the profile in 
	the cases $\alpha=0$ and $\alpha>0$: 
	In the first case $\m_{c,0}$ converges to a point at infinity, 
	while in the second case \eqref{dist1} tells us that $\m_{c,\alpha}$ converges to a great circle. In this sense,  there is a discontinuity in the behavior of
	$\m_{c,\alpha}$ at $\alpha=0$.

	Also, from equation  \eqref{eq-f-j}, we can formally deduce that the difference  between the expanders and shrinkers corresponds to flipping the sign in the parameters $\alpha\to -\alpha$ and $\beta \to -\beta$. Notice that the 
	exponential coefficient in  \eqref{eq-f-j}
	is	proportional  to the square of the curvature, given by $ce^{-\alpha x^2/4}$
	for the skrinkers, and $ce^{\alpha x^2/4}$ for the expanders.
	We used equation \eqref{eq-f-j} (with flipped signs)
	to obtain the asymptotics of the expanders in \cite{gutierrez-delaire1}, 
	relying on the fact the exponential term in equation  vanishes as $x\to\infty$. However,
	the exponential grow  in the case of skrinkers in \eqref{eq-f-j} changes the behavior of the solution and we cannot use the methods introduced   in \cite{gutierrez-delaire1}.

	Going back to Theorem~\ref{thm-conv}, 
	it is seems very difficult to get asymptotics for the  constants in  \eqref{eq:asymp}.
	Our strategy for the constants appearing in the  asymptotics  for the expanders in \cite{gutierrez-delaire1} relied on  obtaining  uniform estimates and using continuity arguments. In particular, using the fact that the constants in \eqref{solution-alpha-0}
	are explicit, we were able to get a  good information about the constants 
	in the asymptotics when $\alpha$ was close to $0$.
	Due to the above mentioned  discontinuity of  $\m_{c,\alpha}$ at $\alpha=0$, 
	it seems unlikely that the use of continuity arguments will provide information for the constants in the asymptotics for the shrinkers.

	Finally, let us also remark that we cannot use continuation arguments 
	to find the  behavior of the circles for $c$ small. This is expected since  $\m_{0,\alpha}(x)=(1,0,0)$ for all $x\in\R$, when $c=0$ (see \eqref{sol-c-0}).
	In Section~\ref{sec-limit-circle} we give 
	some numerical simulations
	for $c$ small.
	\bigskip
	
	\noindent {\bf{Structure of the paper. }} The outline of this paper is the following.   In Section~\ref{sec:EDO}, we study \eqref{eq:EDO:intro} as an elliptic quasilinear system and prove the rigidity result Theorem~\ref{thm:EDO}. By using the Serret--Frenet system, we prove there existence and uniqueness of solution, up to a rotation, in Section~\ref{sec:existence}. We also use this system to obtain the asymptotics 
	of the self-similar profiles. Finally, Section~\ref{sec-limit-circle} is devoted to the proof of Theorem~\ref{plane}.

	\section{Rigidity result. Theorem~\ref{thm:EDO}}
	\label{sec:EDO}
	
	The purpose of this section is to prove the rigidity result stated in Theorem~\ref{thm:EDO} concerning (weak) solutions of the system
	\begin{equation}
	\label{eq:self-similar}
	\frac{x \f'}2=\beta \f\times \f''-\alpha \f\times (\f\times \f''),
	\quad\textup{ on }\R.
	\end{equation}
	We start by introducing the notion of weak solution of the above system. To this end, we first observe that the system \eqref{eq:self-similar} recasts as
	\begin{equation}
	\label{eq:EDO}
	\alpha \f''+\alpha\abs{\f'}^2\f +\beta (\f\times \f')'-\frac{x \f'}2=0,
	\end{equation}
	using  the following vector identities for a (smooth) function 
	$\f$  with $\abs{\f}=1$:
	\bq
	\label{identities}
	\begin{aligned}
		\f\times  \f''&=(\f\times  \f')',\\
		-\f\times (\f\times \f'')&= \f''+\abs{ \f'}^2\f.
	\end{aligned}
	\eq
	We prefer to use the formulation \eqref{eq:EDO} since it is simpler to handle in  weak sense.
	Indeed, we say that $\f=(f_1,f_2,f_3)\in H^1_{\loc}(\R,\S^2)$ is a \emph{weak solution} to
	the system \eqref{eq:EDO} if 
	\begin{equation}
	\label{weak:EDO}
	\int_{\R}
	\Big(-\alpha \f' \cdot \vp'
	+\alpha \abs{\f'}^2\f\cdot \vp
	-\beta(\f\times \f')\cdot \vp'
	-\frac{x}2   \f' \vp
	\Big)dx=0,
	\end{equation}
	for all $\vp=(\varphi_1,\varphi_3,\varphi_3)\in C_0^\infty(\R).$
	
	Using \eqref{identities},
	we can recast \eqref{eq:EDO} as,
	\begin{subequations}
		\label{sys:ODE}
		\begin{align}
		\label{f1}	\alpha f_1''+\alpha \abs{\f'}^2 f_1+\beta(f_2f_3''-f_3 f_2'')-\frac{x}2f'_1=0,\\
		\label{f2} \alpha f_2''+\alpha \abs{\f'}^2 f_2+\beta(f_3f_1''-f_1 f_3'')-\frac{x}2f'_2=0,\\
		\label{f3} \alpha f_3''+\alpha \abs{\f'}^2 f_3+\beta(f_1f_2''-f_2 f_1'')-\frac{x}2f'_3=0.
		\end{align}
	\end{subequations}
	Thus we see that the weak formulation \eqref{weak:EDO}  can be written 
	as 
	\bq
	\label{weak:sys}
	\int_{\R}\boA(\f(x)) \f'(x) \cdot \vp'(x)=\int_{\R} G(x,\f,\f') \vp(x), \quad 
	\text{for all }\vp \in C_0^\infty(\R),
	\eq
	with 
	$$\boA(\u)=\begin{pmatrix}
	\alpha & -\beta u_3 & \beta u_2\\
	\beta u_3 & \alpha & -\beta u_1\\
	-\beta u_2 & \beta u_1 & \alpha,
	\end{pmatrix}
	\quad 
	\textup{ and }
	\quad
	G(x,\u,\bm p)=
	\begin{pmatrix}
	\alpha u_1 \abs{\bm p}^2-\frac{xp_1}2 \\
	\alpha u_2 \abs{\bm p}^2-\frac{xp_2}2\\
	\alpha u_3 \abs{\bm p}^2-\frac{xp_3}2\\
	\end{pmatrix},
	$$
	where $\u =(u_1,u_2,u_3)$ and $\bm p=(p_1,p_2,p_3)$.
	We want now to invoke the regularity theory for quasilinear elliptic system
	(see \cite{lady,giaquinta83}). To verify that the system is indeed uniformly elliptic, 
	we can easily check that 
	$$\boA(\u) \bm \xi \cdot \bm \xi= \alpha \abs{\bm\xi}^2, \quad \text{ for all }\bm\xi,\u\in \R^3.
	$$
	In addition, $G$ has quadratic growth on bounded domains, i.e.\
	$$
	\abs{G(x,\u,\bm p)}\leq \sqrt{3}(M\abs{\bm p}^2+R\abs{\bm p}),
	$$
	for all $\abs{\bm u }\leq M$ and $\abs{x}\leq R$.
	Since a weak solution $\f$ to \eqref{weak:sys} belongs by definition 
	to $H^1_{\loc}(\R;\S^2)$, we have by the Sobolev embedding theorem 
	that $\f$ is H\"older continuous with $\abs{\f(x)}=1$.
	Therefore we can apply the results in Theorem 1.2 in \cite{giaquinta83}
	(see also Lemma~8.6.3 in \cite{jost} or Theorem~2.4.3~in \cite{schulz} for detailed proofs),
	to conclude that $\f \in H^2_{\loc}(\R)\cap W^{1,4}_{\loc}(\R)$, and so 
	that $\f\in \boC_{\loc}^{1,\gamma}(\R)$, for some $\gamma \in (0,1)$.
	We get that $G(x,\f(x),\f'(x))$ belongs to $\boC_{\loc}^{0,\gamma}(\R)$, 
	which allows us to invoke the Schauder regularity theory (see e.g.\ Theorem~A.2.3 in \cite{jost})
	to infer that $\f\in \boC_{\loc}^{2,\gamma}(\R)$. This implies that 
	$G(x,\f(x),\f'(x))$ belongs to $\boC_{\loc}^{1,\gamma}(\R)$, as well as the coefficients
	of $\boA(\u)$, so the Schauder estimates yield that $\f\in \boC_{\loc}^{3,\gamma}(\R)$.
	By induction, we this argument shows that $\f\in \boC^\infty(\R)$.
	
	We are now in position to  complete the proof of Theorem~\ref{thm:EDO}. Indeed, let first remark that differentiating  the relation $\abs{\f}^2=1$, we have
	the identities 
	\begin{gather}
	\label{idenX}
	\f\cdot \f'=0,\\
	\label{idenY}
	\f\cdot \f''=-\abs{\f'}^2.
	\end{gather}
	By taking the cross product of $\f$ and \eqref{eq:EDO}, and using \eqref{identities}, we have
	\bq
	\label{eq:EDO-new}
	\beta \f''+\beta\abs{\f'}^2\f-\alpha (\f\times \f')'+\frac{x}2 \f \times  \f'=0.
	\eq
	Thus, by multiplying  \eqref{eq:EDO} by $\alpha$, 
	\eqref{eq:EDO-new} by $\beta$, and recalling that $\alpha^2+\beta^2=1$,
	we  get
	$$
	\f''+\abs{\f'}^2\f- \frac{x}2 (\alpha \f'-\beta \f\times \f')=0.
	$$ 
	Taking the scalar product of this equation and $\f'$, the identity \eqref{idenX} allow us to conclude that 
	\bq
	\frac12 (\abs{\f'}^2)'-\frac{\alpha x}2  \abs{\f'}^2=0. 
	\eq
	Integrating, we deduce that there is a constant $C\geq 0$ such that 
	$\abs{\f'}^2=Ce^{\alpha x^2/2}$. This completes the proof of Theorem~\ref{thm:EDO}.
	
	\bigskip
	
	We conclude this section with some remarks.
	\begin{remark}
		A similar result to the one stated in Theorem~\ref{thm:EDO} also holds for the expanders solutions. Precisely, any weak solution to 
		\eqref{eq:self-similar}, with $x\f'/2$ replaced by $-x\f'/2$ in the l.h.s.,
		is smooth and  there exists $c\geq 0$ such that  $\abs{\f'(x)}=ce^{-\alpha x^2/4}$, for all $x\in \R$. 
	\end{remark}
	\begin{remark}
		Let us mention that in the case $\alpha=1$, a nonconstant solution $\u : \R^N\to \S^d$
		to equation 
		\bq
		\label{quasi-harmonic}
		\Delta \u+\abs{\grad{\u}}^2\u -\frac{x \cdot \grad \u}2 =0, 
		\quad\textup{ on }\R^N,
		\eq
		is usually called {\em quasi-harmonic sphere}, since it corresponds to the Euler--Lagrange 
		equations of a critical point of the (so-called) quasi-energy \cite{lin-wang} $$E_{\textup{quasi}}(\u)=\int_{\R^N}\abs{\grad{\u(y)}}^2e^{-\abs{y^2}/4}dy.$$
		It has been proved in \cite{fan} the existence of a (real-valued) function $h$
		such that 
		$$\u(x)=\Big(\frac{x}{\abs{x}} \sin(h(\abs{x})),\cos(h(\abs{x})\Big)
		$$
		is a solution to \eqref{quasi-harmonic}  with finite quasi-energy for $3\leq N=d\leq 6$.
		In addition, there is no solution of this form if $d\geq 7$ \cite{bizon-wasserman}.
		Both results are based on the analysis of the second-order ODE associated with $h$. 
		We refer also to \cite{gastel} for a generalization of the existence result for $N\geq 3$
		of other equivariant solutions  to \eqref{quasi-harmonic}.
		In the case $N=1$ and $d=2$, the solution to \eqref{quasi-harmonic} is  explicitly given by \eqref{solution-alpha-1},
		and its associated quasi-energy is infinity, as remarked in \cite{xu-zhou}.
	\end{remark}
	
	\section{Existence, uniqueness and properties}
	\label{sec:existence}
	
	\subsection{Existence and uniqueness of the self-similar profile. Proposition~\ref{prop:unicidad}}
	\label{subsec:existence}
	In the previous section we have shown that any solution to the profile equation
	\begin{equation}
	\label{profile}
	\alpha \m''+\alpha\abs{\m'}^2\m +\beta (\m\times \m')'-\frac{x \m'}2=0,
	\end{equation}
	is smooth and that there is $c\geq 0$ such that
	\bq
	\label{derivada}
	\abs{\m'(x)}=c e^{\alpha x^2/4},\quad \text{ for all }x\in \R.
	\eq
	We want to give now the details about how to construct such a solution by using the Serret--Frenet frame, which will correspond to the profile $\m_{c,\alpha}$ in Theorem~\ref{thm-conv}. 
	The idea is to identify $\m$ as the tangent vector to a curve in $\R^3$, so we  
	first  recall some facts about curves in the space. 
	Given   $\m : \R\to \S^2$ a smooth function, we can define the curve 
	\bq
	\label{def:curve}
	\bm X_{\m}(x)=\int_0^x \m(s)ds,
	\eq
	so that $\bm X_{\m}$ is  smooth, parametrized by arclenght,
	and its  tangent vector is $\m$. In addition, if $\abs{\m'}$ does not vanish on $\R$, 
	we can define the normal vector 
	${\n(x)}={\m'(x)}/{\abs{\m'(x)}}$ and  
	the binormal vector $\b(x)=\m(x)\times \n(x)$.
	Moreover, we can define the curvature and torsion of $\bm X_{\m}$ 
	as $k(x)={\abs{\m'(x)}}$ and $\tau(x)=-{\b'(x)\cdot \n(x)}$.
	Since $\abs{\m(x)}^2=1,$ for all $x\in\R$, we have that $\m(x)\cdot \n(x)=0$, for all $x\in \R$,
	that the vectors $\{\m,\n,\b\}$ are orthonormal and it is standard to check that they satisfy the Serret--Frenet system
	\begin{equation}\label{profile-serret}
	\left\{
	\begin{array}{ll}
	\m'&=k\n,\\
	\n'&=-k\m +\tau \b,\\
	\b'&=-\tau \n.
	\end{array}
	\right.
	\end{equation}
	Let us apply this construction to find a solution to \eqref{profile}.
	We define curve $\bm X_{\m}$ 
	as in \eqref{def:curve}, and remark  that 
	equation \eqref{profile} rewrites in terms of $\{\m, \n , \b   \}$ as 
	
	$$
	\frac{x}{2}k\n=\beta(k'\b-\tau k\n )-\alpha(-k'\n-k\tau\b).
	$$
	Therefore, from the orthogonality of the vectors $\n$ and $\b$,  we conclude that 
	the curvature and torsion of $\bm X_{\m}$ are solutions of the equations
	$$
	\frac{x}{2} k=\alpha k'-\beta \tau k\qquad {\hbox{and}}\qquad \beta k'+\alpha k\tau=0,
	$$
	that is
	\begin{equation}
	\label{profile-kt}
	k(x)= c e^{\frac{\alpha x^2}{4}}\qquad {\hbox{and}}\qquad \tau(x)=-\frac{\beta x}{2},
	\end{equation}
	for some $c\geq 0$. Of course, the fact that  $k(x)= c e^{{\alpha x^2}/{4}}$
	is in agreement with the fact that we must  have $\abs{\m'(x)}=c e^{\alpha x^2/4}$.
	
	Now, given $\alpha\in[0,1]$ and $c>0$, consider the Serret--Frenet system \eqref{profile-serret} with curvature and torsion function given by \eqref{profile-kt}
	and initial conditions
	\begin{equation}\label{profile-initial}
	{\m}(0)=(1,0,0), \quad
	{\n}(0)=(0,1,0), \quad {\b}(0)=(0,0,1).
	\end{equation}
	Then, by standard ODE theory, there exists a unique global solution $\{\m_{c,\alpha}, \n_{c,\alpha}, \b_{c,\alpha}\}$ in $(\mathcal{C}^{\infty}(\mathbb{R}; \mathbb{S}^2))^{3}$, and these vectors are orthonormal. Also, it is straightforward to verify that $\m_{c,\alpha}$
	is a solution to \eqref{profile} satisfying  
	\eqref{derivada}.
	
	The above argument provides the existence of solutions 
	in the statement of Proposition~\ref{prop:unicidad}.
	We will now complete the proof of Proposition~\ref{prop:unicidad} showing the uniqueness of
	such  solutions, up to rotations.

	To this end,  assume that $\tilde \m$ is a weak nontrivial  solution to \eqref{profile}.
	By Theorem~\ref{thm:EDO}, $\tilde \m$ is in $\boC^{\infty}(\R,\S^2)$ and 
	there exists $c>0$  such that 
	$\abs{\tilde \m'(x)}=c e^{\alpha x^2/4}$, for all $x\in \R$.
	Following the above argument, the  curve $\bm X_{\tilde \m}$ (defined  in \eqref{def:curve}), has curvature  $c e^{{\alpha x^2}/{4}}$ and  torsion  $-\beta x/2$.
	Since the curve $\bm X_{\m_{c,\alpha}}$  associated with $\m_{c,\alpha}$,
	and $\bm X_{\tilde \m}$ have the same curvature and torsion, using 
	fundamental theorem of the local theory of space curves (see e.g. Theorem~1.3.5 in \cite{montiel-ros}),
	we conclude  
	that both curves are equal up to direct rigid motion, i.e.\ 
	there exist $\bm p\in\R^3$ and $\boR \in SO(3)$ such that 
	$\bm X_{\tilde \m}(x)=\boR(\bm X_{\m_{c,\alpha}}(x))+\bm p$, for all $x\in \R^3$.
	By differentiating this identity, we finally get that $\tilde \m=\boR \m_{c,\alpha}$,
	which proves the uniqueness of solution, up to a rotation, as stated in Proposition~\ref{prop:unicidad}.
	
	\subsection{Asymptotics of the self-similar profile}
	The rest of this section is devoted to establish  properties of the family of solutions $\{ \mm_{c, \alpha}\}_{c,\alpha}$,
	for fixed $\alpha\in(0,1]$ and $c>0$. Due to the self-similar nature of these solutions, this analysis reduces to study the properties of the associated profile $\m_{c,\alpha}$, or equivalently, of the solution $\{ \m_{c,\alpha}, \n_{c,\alpha}, \b_{c,\alpha}  \}$ of the Serret--Frenet system \eqref{profile-serret} with curvature and torsion given in \eqref{profile-kt}, and initial conditions \eqref{profile-initial}.

	It is important to mention that the recovery of the properties of the trihedron  $\{\m, \n,\b\}$, and in particular of the profile $\m$, from the knowledge of its curvature and torsion  is a difficult question. This can be seen from the equivalent formulations 
	of the Serret--Frenet equation in terms of a second-order complex-valued highly non-linear EDO,
	or in terms of a complex-valued Riccati equation (see e.g.\ \cite{darboux,struik,lamb,gutierrez-delaire1}). For this reason, the integration 
	of the trihedron can  often only be done numerically, rather than analytically.

	Since the Serret--Frenet equations are decoupled, we start by analyzing
	the system for the  {\em scalar} functions $m_{c,\alpha}$, $n_{c,\alpha}$ and $b_{c,\alpha}$
	\begin{equation}\label{serret2}
	\left\{
	\begin{aligned}
	\gm_{c,\alpha}'(x)&=c e^{\frac{\alpha x^2}{4}}\gn_{c,\alpha}(x),\\
	\gn_{c,\alpha}'(x)&=-ce^{\frac{\alpha x^2}{4}}\gm_{c,\alpha}(x) -\frac{\beta x}{2} \gb_{c,\alpha}(x),\\
	\gb_{c,\alpha}'(x)&=\frac{\beta x}{2} \gn_{c,\alpha}(x),
	\end{aligned}
	\right.
	\end{equation}
	with initial conditions $(\gm_{c,\alpha},\gb_{c,\alpha}, \gn_{c,\alpha})(0)$, that we suppose independent of $c$ and $\alpha$, and  satisfying
	$$
	\gm_{c,\alpha}(0)^2+\gb_{c,\alpha}(0)^2+\gn_{c,\alpha}(0)^2=1.
	$$
	Then by ODE theory, the solution is smooth, global and satisfies
	\begin{equation}
	\label{unidad}
	\gm_{c,\alpha}(x)^2+\gb_{c,\alpha}(x)^2+\gn_{c,\alpha}(x)^2=1, \quad \text{for all }x\in \R.
	\end{equation}
	Moreover, the solution  depends continuously on the parameters $c>0$ and $\alpha\in (0,1]$.
	
	To study the behavior of the solution of the system \eqref{serret2}, we need  some elementary bounds for the non-normalized complementary error function.
	\begin{lemma}
		\label{lemma-bounds-erfi}
		Let $\gamma\in(0,1]$. The following upper bounds hold for $x>0$
		\begin{equation}\label{erfc1}
		\int_{x}^\infty e^{-\gamma s^2}ds\leq \frac{1}{2\gamma x}e^{-\gamma x^2}
		\quad\text{ and }\quad
		\int_{x}^\infty se^{-\gamma s^2}ds= \frac{1}{2\gamma}e^{-\gamma x^2}.
		\end{equation}
		Also, for $\gamma\in (0,1]$ and $x\geq 1$,
		\begin{equation}\label{erfc2}
		\int_{x}^\infty s^2 e^{-\gamma s^2}ds\leq \frac{x}{\gamma^2}e^{-\gamma x^2},\quad\text{ and }\quad
		\int_{x}^\infty s^3 e^{-\gamma s^2}ds\leq \frac{x^2}{\gamma^2}e^{-\gamma x^2}.
		\end{equation}
	\end{lemma}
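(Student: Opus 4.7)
\medskip

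\noindent\textbf{Proof plan.} These are four elementary Gaussian-tail bounds, and I would dispatch them by a short chain of direct computations and integrations by parts, chained together in the stated order so that each inequality has available the ones that precede it.

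The plan is first to handle the exact identity $\int_{x}^\infty s e^{-\gamma s^2}\,ds=\frac{1}{2\gamma}e^{-\gamma x^2}$, which is immediate from the antiderivative $-\frac{1}{2\gamma}e^{-\gamma s^2}$. Next, for the bound on $\int_{x}^\infty e^{-\gamma s^2}\,ds$, the standard trick is to insert the factor $1\leq s/x$, valid for $s\geq x>0$, so that
\begin{equation*}
\int_{x}^\infty e^{-\gamma s^2}\,ds
\;\leq\;\frac{1}{x}\int_{x}^\infty s\, e^{-\gamma s^2}\,ds
\;=\;\frac{1}{2\gamma x}e^{-\gamma x^2},
\end{equation*}
using the identity just proved.

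For the bound on $\int_{x}^\infty s^{2}e^{-\gamma s^2}\,ds$ I would integrate by parts by writing $s^2 e^{-\gamma s^2}=s\cdot(se^{-\gamma s^2})$, which gives
\begin{equation*}
\int_{x}^\infty s^{2}e^{-\gamma s^2}\,ds
\;=\;\frac{x}{2\gamma}e^{-\gamma x^2}+\frac{1}{2\gamma}\int_{x}^\infty e^{-\gamma s^2}\,ds
\;\leq\;\frac{x}{2\gamma}e^{-\gamma x^2}+\frac{1}{4\gamma^2 x}e^{-\gamma x^2},
\end{equation*}
after inserting the tail estimate from the previous step. Collecting the two terms and using $x\geq 1$ to replace $1/x$ by $1$, this reduces to showing $2\gamma x+1\leq 4x$, which holds since $\gamma\leq 1$ and $x\geq 1$ together imply $2\gamma x+1\leq 2x+1\leq 3x\leq 4x$. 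For $\int_{x}^\infty s^{3}e^{-\gamma s^2}\,ds$, the same strategy works: integrating by parts with $u=s^2$ and $dv=se^{-\gamma s^2}\,ds$ produces $\frac{x^2}{2\gamma}e^{-\gamma x^2}+\frac{1}{\gamma}\int_{x}^\infty s e^{-\gamma s^2}\,ds=\frac{x^2}{2\gamma}e^{-\gamma x^2}+\frac{1}{2\gamma^2}e^{-\gamma x^2}$, and the desired bound reduces to $\gamma x^2+1\leq 2x^2$, which again follows from $\gamma\leq 1$ and $x\geq 1$.

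There is no real obstacle here; the only point requiring a little care is to keep track of the constants and to choose the correct integration-by-parts decomposition so that the integrated term yields exactly the power of $x$ stated in the bound and the remainder can be absorbed using the hypothesis $x\geq 1$ and the previously established estimate of lower order.
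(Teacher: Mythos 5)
Your proof is correct and follows essentially the same route as the paper: the exact antiderivative for $\int_x^\infty s e^{-\gamma s^2}\,ds$, then integration by parts for the $s^2$ and $s^3$ integrals, with the constants absorbed using $\gamma\leq 1$ and $x\geq 1$ (your IBP for the cubic case reproduces exactly the identity $\int_x^\infty s^3e^{-\gamma s^2}\,ds=\frac{1+\gamma x^2}{2\gamma^2}e^{-\gamma x^2}$ that the paper uses). The only cosmetic difference is that you obtain the tail bound $\int_x^\infty e^{-\gamma s^2}\,ds\leq \frac{1}{2\gamma x}e^{-\gamma x^2}$ directly by inserting $1\leq s/x$, whereas the paper quotes the standard complementary error function inequality and rescales; your version is self-contained and equally valid.
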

	\begin{proof}
		We start recalling some standard bounds the complementary error function (see e.g.~\cite{abram,gordon})
		\begin{gather}
		\label{ineq-erfc2}\frac{xe^{-x^2}}{2x^2+1}\leq \int_x^\infty  e^{- s^2}ds \leq \frac{e^{-x^2}}{2x}, \quad \text{  for }x> 0.
		\end{gather}
		The first formula in \eqref{erfc1} follows by scaling this inequality.
		The second formula in \eqref{erfc1} follows by integration by parts.
		
		To prove the first estimate in \eqref{erfc2}, we use  integration by parts and  \eqref{erfc1} to show that 
		$$
		\int_{x}^\infty s^2 e^{- \gamma s^2}ds=\frac{xe^{-\gamma x^2}}{2\gamma}+\frac{1}{2\gamma}\int_{x}^\infty  e^{-\gamma s^2}\, ds\leq e^{- \gamma x^2}\left(\frac{x}{2\gamma}+\frac{1}{4\gamma^2 x}\right)\leq
		xe^{- \gamma x^2}\left(\frac{1}{2\gamma}+\frac{1}{4\gamma^2 }\right),
		\quad {{\forall x\geq 1}}.
		$$
		Since $\gamma\in (0,1]$,  we have $\gamma^2\leq \gamma$ and thus we conclude the estimate for the desired integral.
		The second inequality in \eqref{erfc2} easily follows from the identity
		$$\int_x^\infty s^3 e^{-\gamma s^2}ds=\frac{1+\gamma x^2}{2\gamma^2}e^{- \gamma x^2}, \qquad \forall x\in\mathbb{R},
		$$
		noticing that $1+\gamma x^2\leq x^2(1+\gamma)\leq 2x^2$, since $x\geq 1$ and $\gamma\in (0,1]$.
	\end{proof}
	Now we can state a first result on the behavior of  $\{\gm_{c,\alpha}, \gn_{c,\alpha}, \gb_{c,\alpha}\}$.
	\begin{prop}
		\label{mnb-formulae}
		Let $\alpha\in(0,1]$ and $c>0$, and define
		$$
		\Phi_\alpha(x)=\int_{0}^{x} e^{\frac{\alpha s^2}{4}}\, ds. 
		$$
		Then the following statements hold.
		\begin{itemize}
			\item[\it{i)}] For all $x \in \R$,
			\begin{equation}\label{b-asymp}
			\gb_{c,\alpha}(x)=B_{c,\alpha}+\frac{\beta x}{2c} e^{-\alpha x^2/4 }\gm_{c,\alpha}(x)+
			\frac{\beta}{2c}\int_x^\infty \left(1-\frac{\alpha s^2}{2} \right)e^{-\alpha s^2/4 }\gm_{c,\alpha}(s)ds,
			\end{equation}
			where
			\begin{equation}\label{b-infty}
			B_{c,\alpha}=\gb_{c,\alpha}(0)-\frac{\beta}{2c}\int_0^\infty \left(1-\frac{\alpha s^2}{2} \right)e^{-\alpha s^2/4 }\gm_{c,\alpha}(s)ds.
			\end{equation}
			In particular, for all $x\geq 1$
			\begin{equation}\label{est-b}
			\abs{\gb_{c,\alpha}(x)-B_{c,\alpha}}\leq \frac{6\beta }{c{\alpha}}x e^{-\alpha x^2/4 }.
			\end{equation}
			\item[\it{ii)}] Setting $\gw_{c,\alpha}=\gm_{c,\alpha}+i\gn_{c,\alpha}$,   for all $x \in \R$, we have
			\begin{multline}\label{w-asymp}
			\gw_{c,\alpha}(x)=
			e^{-ic\, \Phi_\alpha(x)}
			\Big(
			W_{c,\alpha}-\frac{\beta x}{2c} e^{ic\, \Phi_\alpha(x)-\alpha x^2/4 }\gb_{c,\alpha}(x)
			\\
			-\frac{\beta}{2c}\int_x^\infty e^{ic\, \Phi_\alpha(s)-\alpha s^2/4 }
			\big(
			\frac{\beta s^2}2\gn_{c,\alpha}(s)+\big(1-\frac{\alpha s^2}{2}\big) \gb_{c,\alpha}(s)
			\big) ds
			\Big),
			\end{multline}
			where
			\begin{equation}\label{w-infty}
			W_{c,\alpha}=\gw_{c,\alpha}(0)+\frac{\beta}{2c}\int_0^\infty e^{ic\, \Phi_\alpha(s)-\alpha s^2/4 }
			\big(
			\frac{\beta s^2}2\gn_{c,\alpha}(s)+\big(1-\frac{\alpha s^2}{2}\big) \gb_{c,\alpha}(s)
			\big) ds.
			\end{equation}
			In particular, for all $x\geq 1$,
			\begin{equation}\label{est-w}
			\abs{\gw_{c,\alpha}(x)- e^{-ic\, \Phi_\alpha(x)}W_{c,\alpha}}\leq \frac{10\beta }{c\alpha^2}x e^{-\alpha x^2/4 }.
			\end{equation}
		\end{itemize}
		Furthermore, the limiting values $B_{c,\alpha}$ and $W_{c,\alpha}$ are  separately continuous
		functions of $(c,\alpha)$ for $(c,\alpha)\in (0,\infty)\times (0,1]$.
	\end{prop}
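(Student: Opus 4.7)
The plan is to derive both asymptotic formulae by converting the Serret--Frenet system into two separate scalar problems and integrating by parts so that the remainders are absolutely convergent. The central observation is that the first equation in \eqref{serret2} gives $\gn_{c,\alpha}=\gm_{c,\alpha}'/(ce^{\alpha x^2/4})$, so the third equation rewrites as
\begin{equation*}
\gb_{c,\alpha}'(x)=\frac{\beta x}{2c}e^{-\alpha x^2/4}\gm_{c,\alpha}'(x),
\end{equation*}
which decouples $\gb_{c,\alpha}$ from the oscillatory pair. Meanwhile, combining the first two equations gives $\gw_{c,\alpha}'+ice^{\alpha x^2/4}\gw_{c,\alpha}=-i(\beta x/2)\gb_{c,\alpha}$, a linear first-order ODE whose integrating factor is $e^{ic\Phi_\alpha(x)}$.

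For part (i), I would integrate the rewritten $\gb_{c,\alpha}'$ from $x$ to $y$ and integrate by parts, pairing $u=(\beta s/(2c))e^{-\alpha s^2/4}$ with $dv=\gm_{c,\alpha}'(s)\,ds$. The boundary contribution at $y$ is $O(ye^{-\alpha y^2/4})$ since $|\gm_{c,\alpha}|\leq 1$, so it vanishes as $y\to\infty$; this shows $\gb_{c,\alpha}(y)$ has a finite limit, which I name $B_{c,\alpha}$, and yields formula \eqref{b-asymp}. Setting $x=0$ then produces \eqref{b-infty}. For the bound \eqref{est-b}, I would apply Lemma~\ref{lemma-bounds-erfi} to the remainder integral of $(1-\alpha s^2/2)e^{-\alpha s^2/4}$, using $|\gm_{c,\alpha}|\leq 1$ and the constraint $\alpha\leq 1$ to collect constants into the stated $6\beta/(c\alpha)$.

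For part (ii), the integrating factor gives
\begin{equation*}
\bigl(e^{ic\Phi_\alpha(x)}\gw_{c,\alpha}(x)\bigr)'=-i\frac{\beta x}{2}e^{ic\Phi_\alpha(x)}\gb_{c,\alpha}(x).
\end{equation*}
The integrand is oscillatory and only conditionally convergent at infinity, so a direct bound would fail. The key manipulation is to write $xe^{ic\Phi_\alpha(x)}=(xe^{-\alpha x^2/4}/(ic))\tfrac{d}{dx}e^{ic\Phi_\alpha(x)}$ and integrate by parts, using $\gb_{c,\alpha}'=(\beta x/2)\gn_{c,\alpha}$ when differentiating the remaining factor. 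This produces one boundary term at $x$ proportional to $xe^{-\alpha x^2/4}\gb_{c,\alpha}(x)e^{ic\Phi_\alpha(x)}$ and an absolutely convergent remainder with integrand bounded by $(1+\alpha s^2/2+\beta s^2/2)e^{-\alpha s^2/4}$. Existence of the limit $W_{c,\alpha}$ of $e^{ic\Phi_\alpha(y)}\gw_{c,\alpha}(y)$ and the representation \eqref{w-asymp} then follow, with \eqref{w-infty} obtained by evaluating at $x=0$. The bound \eqref{est-w} is again Lemma~\ref{lemma-bounds-erfi} applied termwise, with the collection of constants producing $10\beta/(c\alpha^2)$ under $\beta\leq 1$ and $x\geq 1$.

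The hardest part is finding the right integration by parts for $\gw_{c,\alpha}$: naively estimating $|\int_x^\infty se^{ic\Phi_\alpha(s)}\gb_{c,\alpha}|$ diverges, and integrating by parts with $\gb_{c,\alpha}$ as the differentiated factor leads nowhere since $v$ would not decay. The trick of absorbing the factor $xe^{\alpha x^2/4}$ into $\tfrac{d}{dx}e^{ic\Phi_\alpha(x)}$, which simultaneously supplies the $e^{-\alpha s^2/4}$ gain and produces the correct $\gb$--$\gn$ structure inside the remainder, is what makes the whole argument work. Finally, the separate continuity of $B_{c,\alpha}$ and $W_{c,\alpha}$ in each parameter follows from continuous dependence of the solution $(\gm_{c,\alpha},\gn_{c,\alpha},\gb_{c,\alpha})$ on $(c,\alpha)$ combined with dominated convergence applied to the exponentially decaying integrands in \eqref{b-infty} and \eqref{w-infty}.
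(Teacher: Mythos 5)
Your proposal is correct and follows essentially the same route as the paper: rewriting $\gb_{c,\alpha}'=\frac{\beta x}{2c}e^{-\alpha x^2/4}\gm_{c,\alpha}'$ and integrating by parts for part (i), using the integrating factor $e^{ic\Phi_\alpha}$ and absorbing $e^{ic\Phi_\alpha}$ into $\frac{1}{ic}e^{-\alpha s^2/4}\frac{d}{ds}e^{ic\Phi_\alpha}$ (with $\gb'=\frac{\beta s}{2}\gn$ in the differentiated factor) for part (ii), then Lemma~\ref{lemma-bounds-erfi} for the quantitative bounds and continuous dependence plus dominated convergence for the continuity of $B_{c,\alpha}$ and $W_{c,\alpha}$. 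No gaps to report.
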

	\begin{proof}
		For simplicity, we will drop the subscripts $c$ and $\alpha$ if there is no possible confusion.  From \eqref{serret2}, we get
		\bq
		\label{eq-b}
		\begin{aligned}
			\gb(x)-\gb(0)=&\int_0^x \gb'(s)ds=\frac{\beta}{2c} \int_0^x se^{-\frac{\alpha s^2}{4}}\gm'(s)ds
			\\
			=& \frac{\beta}{2c}\Big(
			xe^{-\frac{\alpha x^2}{4}}\gm(x)- \int_0^x \big(1-\frac{\alpha s^2}{2}\big) e^{-\frac{\alpha s^2}{4}}\gm(s)ds\Big),
		\end{aligned}
		\eq
		where we have used integration by parts.
		Notice that $\int_{0}^{\infty} (1-\alpha s^2/2)e^{-\alpha s^2/4}\gm(s)\, ds$ is well-defined, since $\alpha\in(0,1]$ and $\gm$ is bounded. Therefore, the existence of $B:=\lim_{x\rightarrow\infty}\gb(x)$ follows from \eqref{eq-b}. Moreover,
		$$
		B:=\gb(0)-\frac{\beta}{2c}\int_0^\infty \left(1-\frac{\alpha s^2}{2} \right)e^{-\alpha s^2/4 }\gm(s)ds.
		$$
		Formula \eqref{b-asymp} easily follows from integrating $\gb'$ from $x\in\mathbb{R}$ to $\infty$ and arguing as above.
		
		To prove \eqref{est-b}, it is enough to observe that by Lemma~\ref{lemma-bounds-erfi}, for $x\geq1$ and $0<\alpha\leq 1$,
		\begin{equation}\label{est-integral}
		\int_x^\infty e^{-\alpha s^2/4 }ds\leq
		\frac{2}{\alpha x} e^{-\frac{\alpha x^2}{4}} 
		\leq \frac{2}{\alpha}x e^{-\frac{\alpha x^2}{4}}, 
		\quad \text{ and }\quad
		\int_x^\infty s^2 e^{-\alpha s^2/4 }ds\leq
		\frac{16}{\alpha^2}xe^{-\frac{\alpha x^2}{4}}.
		\end{equation}
		
		Setting   $\gw=\gm+i\gn$ and using \eqref{serret2}, we obtain that $\gw$ satisfies the ODE
		\begin{equation}\label{eq-w}
		\gw'+i c e^{\alpha x^2/4}\gw= - i\frac{\beta x}{2} \gb(x),
		\end{equation}
		or, equivalently,
		\begin{equation}
		\label{eq-w1}
		\Big( e^{ic\Phi_\alpha(x)} \gw \Big)^{'}= - i\frac{\beta x}{2} \gb(x)e^{ic\Phi_\alpha(x)}.
		\end{equation}
		Integrating \eqref{eq-w1} from $0$ to $x>0$, and writing
		$$
		e^{ic\Phi_\alpha(x)}=-\frac{i}{c} \Big( e^{ic\Phi_\alpha(x)}\Big)^{'} e^{-\alpha x^2/4},
		$$
		integrating by parts, and using once again  \eqref{serret2}, we get
		\begin{align*}
		e^{ic\Phi_\alpha(x)}\gw(x)=&
		\gw(0)-\frac{\beta}{2c}x\gb(x)e^{ic\Phi_\alpha(x)-\alpha x^2/4}
		\\
		&+\frac{\beta}{2c}\int_{0}^{x}  e^{ic\Phi_\alpha(s)-\alpha s^2/4} \Big(  \frac{\beta}{2}s^2\gn(s)+(1-\alpha\frac{s^2}{2})\gb(s)  \Big)\, ds.
		\end{align*}
		Since $\alpha\in(0,1]$, from the above identity it follows the existence of
		$$
		W:=\lim_{x\rightarrow \infty} e^{ic\Phi_\alpha(x)} \gw(x),
		$$
		and formula \eqref{w-infty} for $W$.
		
		Formula \eqref{w-asymp} now follows from integrating \eqref{eq-w1} from $x>0$ to $\infty$ and arguing as in the previous lines. The estimate in \eqref{est-w} can be deduced as before, 
		since the bounds in \eqref{est-integral} imply that 
		$$
		\abs{\gw_{c,\alpha}(x)- e^{-ic\, \Phi_\alpha(x)}W_{c,\alpha}}\leq \frac{\beta }{2c}x e^{-\alpha x^2/4 }\left(1+\frac{16(\alpha+\beta)}{2\alpha^2}+\frac{2}{\alpha}\right)
		\leq  \frac{10\beta }{c\alpha^2}x e^{-\alpha x^2/4 },
		$$
		where we used that $\alpha+\beta\leq 2$ and $\alpha \leq 1$.
		
		To see that the limiting values $B_{c,\alpha}$ and $W_{c,\alpha}$ given by \eqref{b-infty} and \eqref{w-infty} are continuous functions of  $(c,\alpha)$, for $(c,\alpha)\in(0,\infty)\times(0,1]$, we recall that by standard ODE theory, the functions
		$\gm_{c,\alpha}(x)$, $\gn_{c,\alpha}(x)$ and $\gb_{c,\alpha}(x)$ are continuous functions of $x$, $c$ and $\alpha$.
		Then, the dominated convergence theorem applied to the formulae \eqref{b-infty} and \eqref{w-infty} yield the desired continuity.
	\end{proof}
	\begin{remark}
		\label{rem-alpha-1}
		As mentioned before, the shrinkers of the 1d-harmonic heat flow can be computed explicitly,
		because  if $\alpha=1$, the system \eqref{curvature}-\eqref{serret}-\eqref{IC} can be solved easily. Indeed, in this case $\beta=0$, so that 
		we obtain	
		\begin{align*}
		\m_{c,1}(x)&=(\cos(c\Phi_1(x)),\sin(c\Phi_1(x)), 0),\\
		\n_{c,1}(x)&=(-\sin(c\Phi_1(x)),\cos(c\Phi_1(x)), 0),\\
		\b_{c,1}(x)&=(0,0,1),
		\end{align*}
		for all $x\in \R$. 	
	\end{remark}
	In order to obtain a better understanding of the asymptotic behavior of $\{\gm_{c,\alpha}, \gn_{c,\alpha}, \gb_{c,\alpha}\}$, we need to exploit the osci\-lla\-tory character of the function
	$e^{ic \Phi_{\alpha}(s)}$ in the integrals \eqref{b-asymp} and \eqref{w-asymp}. In our arguments we will use the following two lemmas.
	\begin{lemma}
		\label{lem-osc1}
		Let $0<\alpha\leq 1$. For $\sigma\in \R\setminus\{0\}$ and $x\in \R$, the limit
		$$ \int_x^\infty se^{i\sigma \Phi_\alpha(s)}ds:= \lim_{y\to \infty}  \int_x^y se^{i\sigma \Phi_\alpha(s)}ds$$
		exists. Moreover, for all $x\geq 1$,
		\begin{equation}
		\label{est-osc1}
		\left| \int_x^\infty se^{i\sigma \Phi_\alpha(s)}ds \right|\leq 
		\frac{11x}{|\sigma|\alpha}e^{-\alpha x^2/4},
		\end{equation}
		and
		\begin{equation}
		\label{est-osc2}
		\int_x^\infty se^{i\sigma \Phi_\alpha(s)}ds =i\frac{x}{\sigma}e^{i\sigma \Phi_\alpha(x)-\alpha x^2/4}+
		\boO\left( \frac{{{x^2}}}{\sigma^2}e^{-\alpha x^2/2}  \right).
		\end{equation}
	\end{lemma}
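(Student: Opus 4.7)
The plan is to exploit the oscillatory factor $e^{i\sigma\Phi_\alpha(s)}$ through integration by parts, using the key identity $\frac{d}{ds}e^{i\sigma\Phi_\alpha(s)} = i\sigma e^{\alpha s^2/4}e^{i\sigma\Phi_\alpha(s)}$, which allows us to write
\begin{equation*}
e^{i\sigma\Phi_\alpha(s)} = \frac{1}{i\sigma}\,e^{-\alpha s^2/4}\frac{d}{ds}e^{i\sigma\Phi_\alpha(s)}.
\end{equation*}
Substituting this into the integrand $s\,e^{i\sigma\Phi_\alpha(s)}$ and integrating by parts on $[x,y]$, I obtain a boundary term $\frac{1}{i\sigma}\bigl[s\,e^{-\alpha s^2/4}e^{i\sigma\Phi_\alpha(s)}\bigr]_x^y$ plus a remainder of the form $-\frac{1}{i\sigma}\int_x^y\bigl(1-\tfrac{\alpha s^2}{2}\bigr)e^{-\alpha s^2/4}e^{i\sigma\Phi_\alpha(s)}\,ds$. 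Since $s\,e^{-\alpha s^2/4}\to 0$ as $s\to\infty$ and the remainder integrand is absolutely integrable (thanks to $\alpha>0$), taking $y\to\infty$ gives existence of the limit and the identity
\begin{equation*}
\int_x^\infty s\,e^{i\sigma\Phi_\alpha(s)}\,ds = \frac{i\,x}{\sigma}\,e^{i\sigma\Phi_\alpha(x)-\alpha x^2/4}+\frac{i}{\sigma}\int_x^\infty\Bigl(1-\frac{\alpha s^2}{2}\Bigr)e^{-\alpha s^2/4}e^{i\sigma\Phi_\alpha(s)}\,ds.
\end{equation*}

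For the bound \eqref{est-osc1}, I estimate the remainder by its absolute value, bounding $|1-\alpha s^2/2|\leq 1+\alpha s^2/2$ and applying the moment estimates \eqref{erfc1}--\eqref{erfc2} from Lemma~\ref{lemma-bounds-erfi} with $\gamma=\alpha/4$. This yields $\int_x^\infty e^{-\alpha s^2/4}ds\leq\frac{2}{\alpha x}e^{-\alpha x^2/4}$ and $\int_x^\infty s^2 e^{-\alpha s^2/4}ds\leq\frac{16x}{\alpha^2}e^{-\alpha x^2/4}$. For $x\geq 1$ and $\alpha\in(0,1]$ these combine to give a remainder bounded by $\frac{10x}{|\sigma|\alpha}e^{-\alpha x^2/4}$; adding the boundary contribution $\frac{x}{|\sigma|}e^{-\alpha x^2/4}\leq\frac{x}{|\sigma|\alpha}e^{-\alpha x^2/4}$ yields the constant $11$ in \eqref{est-osc1}.

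To upgrade this to the sharper expansion \eqref{est-osc2}, I perform a second integration by parts on the remainder, applying the same oscillatory identity once more. This converts the remainder into
\begin{equation*}
\frac{1}{\sigma^2}\Bigl[\bigl(1-\tfrac{\alpha s^2}{2}\bigr)e^{-\alpha s^2/2}e^{i\sigma\Phi_\alpha(s)}\Bigr]_x^\infty-\frac{1}{\sigma^2}\int_x^\infty\frac{d}{ds}\Bigl[\bigl(1-\tfrac{\alpha s^2}{2}\bigr)e^{-\alpha s^2/2}\Bigr]e^{i\sigma\Phi_\alpha(s)}\,ds,
\end{equation*}
noting that the Gaussian factor $e^{-\alpha s^2/2}$ now appears. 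The boundary contribution at $\infty$ vanishes, while the one at $s=x$ is of order $\frac{x^2}{\sigma^2}e^{-\alpha x^2/2}$ for $x\geq 1$. The new integral is controlled by computing $\frac{d}{ds}[(1-\alpha s^2/2)e^{-\alpha s^2/2}]=-\alpha s(2-\alpha s^2/2)e^{-\alpha s^2/2}$ and again applying Lemma~\ref{lemma-bounds-erfi} (this time with $\gamma=\alpha/2$, using the bounds for $\int s\,e^{-\gamma s^2}ds$ and $\int s^3 e^{-\gamma s^2}ds$); this produces an $\boO(x^2 e^{-\alpha x^2/2}/\sigma^2)$ bound as well, yielding \eqref{est-osc2}.

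The argument is essentially mechanical; the only delicate point is tracking constants carefully so that the prefactor $11$ in \eqref{est-osc1} is valid uniformly in $\alpha\in(0,1]$ and $x\geq 1$, and that the big-$\boO$ in \eqref{est-osc2} is indeed independent of $\sigma$ and $\alpha$ (apart from the explicit factors displayed).
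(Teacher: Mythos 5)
Your proof is correct and follows essentially the same route as the paper: the identity $e^{i\sigma\Phi_\alpha(s)}=\tfrac{1}{i\sigma}e^{-\alpha s^2/4}\bigl(e^{i\sigma\Phi_\alpha(s)}\bigr)'$, one integration by parts for existence and \eqref{est-osc1}, a second one on the remainder for \eqref{est-osc2}, with Lemma~\ref{lemma-bounds-erfi} supplying the Gaussian moment bounds. The constant bookkeeping (boundary term plus $\tfrac{10x}{|\sigma|\alpha}$ giving $11$, and the $\gamma=\alpha/2$ estimates for the second step) matches the paper's argument.
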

	\begin{proof}
		Let $x\in\mathbb{R}$ and take $y\geq x$. Then, integrating by parts,
		\begin{align}
		\label{est-osc3}
		\int_x^y se^{i\sigma \Phi_\alpha(s)}ds=&
		\frac{1}{i\sigma}\int_x^y s(e^{i\sigma \Phi_{\alpha}(s)})'e^{-\alpha s^2/4}\, ds
		\nonumber \\
		=&\left. \frac{s}{{i\sigma}} e^{i\sigma \Phi_{\alpha}(s)-\alpha s^2/4}\right|_{x}^y-
		\frac{1}{i\sigma}\int_{x}^y e^{i\sigma \Phi_{\alpha}(s)-\alpha s^2/4}\big(1-\frac{\alpha {{s^2}}}{2}\big)\, ds.
		\end{align}
		The existence of the improper integral $\int_{x}^{\infty} se^{i\sigma \Phi_{\alpha}(s)}\, ds$ follows taking the limit as $y$ goes to $\infty$ in the above formula,
		and bearing in mind that $\alpha>0$. The estimate \eqref{est-osc1} follows from \eqref{est-integral} and the fact that $x\geq 1$ and $0<\alpha\leq 1$. Finally, integrating by parts once more, we have
		\begin{equation*}
		i\sigma\int_{x}^\infty e^{i\sigma \Phi_{\alpha}(s)-\alpha s^2/4}\big(1-\frac{\alpha {{s^2}}}{2}\big) ds
		=
		-e^{i\sigma \Phi_{\alpha}(x)-\alpha x^2/2}\big(1-\frac{\alpha {{x^2}}}{2}\big)
		-
		\int_{x}^\infty e^{i\sigma \Phi_{\alpha}(s)-\alpha s^2/2}\big(\frac{\alpha^2 {{s^3}}}{2}
		-2\alpha s
		\big) ds.
		\end{equation*}
		Hence,  using Lemma~\ref{lemma-bounds-erfi} and \eqref{est-osc3}, we obtain \eqref{est-osc2}.
	\end{proof}
	\begin{lemma}
		\label{lem-osc2}
		Let $\sigma\in \R\setminus\{0\}$, $\gamma\in \R$, $\alpha>0$ and set $\tilde \gamma=\gamma+\alpha/4$. If ${{0<\tilde\gamma\leq 1}}$, then for $x\geq 1$,
		\begin{equation}
		\begin{gathered}
		\int_x^\infty e^{i\sigma \Phi_\alpha(s)-\gamma s^2}ds={{\boO\left( \frac{e^{-\tilde \gamma x^2}}{|\sigma| }  \right)}}, \qquad
		\int_x^\infty se^{i\sigma \Phi_\alpha(s)-\gamma s^2}ds={{\boO\left( \frac{x e^{-\tilde \gamma x^2}}{|\sigma|{\tilde\gamma}})  \right) }},\\
		\int_x^\infty s^2e^{i\sigma \Phi_\alpha(s)-\gamma s^2}ds={{\boO\left( \frac{x^2 e^{-\tilde \gamma x^2}}{|\sigma|\tilde\gamma}  \right)   }}.
		\end{gathered}
		\end{equation}
	\end{lemma}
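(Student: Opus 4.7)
The plan is to integrate by parts by exploiting the key identity
\[
e^{i\sigma \Phi_\alpha(s)} = \frac{1}{i\sigma}\, e^{-\alpha s^2/4}\, \bigl( e^{i\sigma \Phi_\alpha(s)} \bigr)',
\]
which follows immediately from $\Phi_\alpha'(s) = e^{\alpha s^2/4}$. This is the engine of the argument: it absorbs the oscillation into a derivative (producing the $1/|\sigma|$ saving) while simultaneously converting the Gaussian weight $e^{-\gamma s^2}$ into $e^{-\tilde\gamma s^2}$, which then controls the polynomial growth arising from differentiating $s^k$. Convergence of the three improper integrals is automatic since $\tilde\gamma>0$ makes each integrand absolutely integrable at infinity.

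For $k\in\{0,1,2\}$, set $I_k(x):=\int_x^\infty s^k e^{i\sigma\Phi_\alpha(s)-\gamma s^2}\,ds$. Substituting the identity and integrating by parts, the endpoint at infinity vanishes (by $\tilde\gamma>0$) while the endpoint at $s=x$ gives $-\frac{1}{i\sigma}\,x^k\,e^{i\sigma\Phi_\alpha(x)-\tilde\gamma x^2}$, which already has the size claimed in the statement. The remainder is
\[
\frac{1}{i\sigma}\int_x^\infty\bigl(2\tilde\gamma s^{k+1}-k s^{k-1}\bigr)\,e^{-\tilde\gamma s^2}\,e^{i\sigma\Phi_\alpha(s)}\,ds,
\]
which I would bound trivially by dropping the oscillatory phase, reducing matters to real Gaussian integrals.

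The real Gaussian integrals are handled by the elementary bounds in Lemma~\ref{lemma-bounds-erfi} applied with $\gamma$ replaced by $\tilde\gamma\in(0,1]$. For $k=0$ one uses $\int_x^\infty s\,e^{-\tilde\gamma s^2}ds=\frac{1}{2\tilde\gamma}e^{-\tilde\gamma x^2}$, yielding $\boO(e^{-\tilde\gamma x^2}/|\sigma|)$. For $k=1$ one combines the estimates on $\int_x^\infty e^{-\tilde\gamma s^2}ds$ and $\int_x^\infty s^2 e^{-\tilde\gamma s^2}ds$, getting a remainder of order $x e^{-\tilde\gamma x^2}/\tilde\gamma$. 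For $k=2$ one combines those on $\int_x^\infty s\,e^{-\tilde\gamma s^2}ds$ and $\int_x^\infty s^3 e^{-\tilde\gamma s^2}ds$, producing $x^2 e^{-\tilde\gamma x^2}/\tilde\gamma$. Adding these to the respective boundary terms (which are smaller than or comparable, since $\tilde\gamma\leq 1$) gives the three claimed bounds.

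There is no serious obstacle; the only bookkeeping care is to use repeatedly $\tilde\gamma\leq 1$ and $x\geq 1$ to absorb subdominant polynomial factors (for example, $1/x$ absorbed by $x$ when $k=1$, and $1/\tilde\gamma$ absorbed by $x^2/\tilde\gamma$ when $k=2$), so that the final dependence on $\tilde\gamma$ and $|\sigma|$ is exactly as stated.
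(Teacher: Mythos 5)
Your proof is correct and is essentially the paper's own argument: the same integration by parts based on $e^{i\sigma \Phi_\alpha(s)}=\frac{1}{i\sigma}e^{-\alpha s^2/4}\bigl(e^{i\sigma \Phi_\alpha(s)}\bigr)'$, yielding the boundary term $-\frac{1}{i\sigma}x^n e^{i\sigma\Phi_\alpha(x)-\tilde\gamma x^2}$ plus a remainder $\frac{1}{i\sigma}\int_x^\infty\bigl(2\tilde\gamma s^{n+1}-ns^{n-1}\bigr)e^{i\sigma\Phi_\alpha(s)-\tilde\gamma s^2}\,ds$ controlled by Lemma~\ref{lemma-bounds-erfi} with $\tilde\gamma\in(0,1]$ in place of $\gamma$. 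One small caveat: since only $\tilde\gamma>0$ is assumed, $\gamma$ may be nonpositive and the integrand then is not absolutely integrable, so the existence of the improper integrals should be justified as in Lemma~\ref{lem-osc1}, by integrating by parts over $[x,y]$ and letting $y\to\infty$, rather than by absolute integrability.
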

	\begin{proof}
		For $n\in\{0,1,2\}$, we set 
		$$I_n=\int_x^\infty s^n e^{i\sigma \Phi_\alpha(s)-\gamma s^2}ds.$$
		$$
		I_n=\frac{1}{i\sigma}\left( 
		-x^ne^{i\sigma \Phi_\alpha(x)-\tilde\gamma x^2}  -\int_x^\infty  e^{i\sigma \Phi_\alpha(s)-\tilde\gamma s^2}\left( 
		ns^{n-1}-2  \tilde\gamma s^{n+1 }
		\right) ds
		\right).
		$$
		Then the desired asymptotics follow from Lemma~\ref{lemma-bounds-erfi}.
	\end{proof}
	Using previous lemmas, we can now improve the asymptotics in Proposition~\ref{mnb-formulae}
	and obtain explicitly the term decaying as  $e^{-\alpha x^2/4}$ (multiplied by a polynomial).
	\begin{cor}
		\label{cor-asymp}
		With the same notation as in Proposition~\ref{mnb-formulae}, the following asymptotics hold for $x\geq 1$
		\begin{align}
		\label{asymp-b}
		\gb_{c,\alpha}(x)=&B_{c,\alpha}+\frac{\beta x}{2c} e^{-\alpha x^2/4}
		\Re (e^{-ic\Phi_\alpha(x)} W_{c,\alpha})
		+\frac{\beta }{c^2 \alpha^{3}}  \boO(x^2 e^{-\alpha x^2/2}),
		\\
		\label{asymp-w}
		\gw_{c,\alpha}(x)=&
		e^{-ic\, \Phi_\alpha(x)}
		\Big(
		W_{c,\alpha}-\frac{\beta B_{c,\alpha}}{2c}xe^{ic\, \Phi_{\alpha}(x)-\alpha x^2/4}+\frac{i\beta^2 W_{c,\alpha}}{8c}\int_{x}^{\infty}s^2e^{-\alpha s^2/4}\, ds
		\Big)\\
		&+ \frac{\beta}{c^2 \alpha^5}\boO(x^2 e^{-\alpha x^2/2}).\nonumber
		\end{align}
	\end{cor}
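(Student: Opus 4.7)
The plan is to iterate the bootstrap already initiated in Proposition~\ref{mnb-formulae}: substitute the leading-order expressions $\gw_{c,\alpha}(s) = e^{-ic\Phi_\alpha(s)}W_{c,\alpha} + \boO(\beta s e^{-\alpha s^2/4}/(c\alpha^2))$ and $\gb_{c,\alpha}(s) = B_{c,\alpha} + \boO(\beta s e^{-\alpha s^2/4}/(c\alpha))$, coming from \eqref{est-b}--\eqref{est-w}, back into the integral representations \eqref{b-asymp} and \eqref{w-asymp}, and bound the resulting oscillatory integrals using Lemma~\ref{lem-osc2} together with Lemma~\ref{lemma-bounds-erfi}.

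For \eqref{asymp-b}, I would start from \eqref{b-asymp} and replace $\gm_{c,\alpha}(x) = \Re\gw_{c,\alpha}(x)$ by $\Re(e^{-ic\Phi_\alpha(x)}W_{c,\alpha})$ in the boundary term. This produces the stated main contribution $\tfrac{\beta x}{2c}e^{-\alpha x^2/4}\Re(e^{-ic\Phi_\alpha(x)}W_{c,\alpha})$ plus an error of order $\tfrac{\beta}{c}\cdot x e^{-\alpha x^2/4}\cdot \tfrac{\beta x e^{-\alpha x^2/4}}{c\alpha^2}$. In the integral remainder, I substitute the same approximation for $\gm_{c,\alpha}$ and separate the result into a genuinely oscillatory piece, handled by Lemma~\ref{lem-osc2} with $\sigma=-c$, $\gamma=\alpha/4$, $\tilde\gamma=\alpha/2$, and an elementary remainder bounded through Lemma~\ref{lemma-bounds-erfi}. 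Collecting everything yields the claimed error $\tfrac{\beta}{c^2\alpha^3}\boO(x^2 e^{-\alpha x^2/2})$.

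For \eqref{asymp-w}, I would start from \eqref{w-asymp}. Replacing $\gb_{c,\alpha}(x)$ by $B_{c,\alpha}$ in the boundary term yields $-\tfrac{\beta B_{c,\alpha}x}{2c}e^{ic\Phi_\alpha(x)-\alpha x^2/4}$ up to an error controlled by \eqref{est-b}. In the integrand, the contribution $(1-\alpha s^2/2)\gb_{c,\alpha}(s)\approx (1-\alpha s^2/2)B_{c,\alpha}$ is purely oscillatory and is absorbed in the error by Lemma~\ref{lem-osc2}. The delicate piece is $\tfrac{\beta s^2}{2}\gn_{c,\alpha}(s)$: writing $\gn_{c,\alpha}=(\gw_{c,\alpha}-\overline{\gw_{c,\alpha}})/(2i)$ and applying \eqref{est-w} gives
\begin{equation*}
e^{ic\Phi_\alpha(s)}\gn_{c,\alpha}(s)=\frac{1}{2i}\Bigl(W_{c,\alpha}-e^{2ic\Phi_\alpha(s)}\overline{W_{c,\alpha}}\Bigr)+\boO\!\Bigl(\tfrac{\beta s e^{-\alpha s^2/4}}{c\alpha^2}\Bigr).
\end{equation*}
The non-oscillatory piece $W_{c,\alpha}/(2i)$, combined with the outer prefactor $-\tfrac{\beta}{2c}\cdot\tfrac{\beta}{2}$ and the identity $-1/(4i)=i/4$, produces exactly the explicit term $\tfrac{i\beta^2 W_{c,\alpha}}{8c}\int_x^\infty s^2 e^{-\alpha s^2/4}ds$. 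The oscillatory piece carrying $e^{2ic\Phi_\alpha(s)}$ is handled by the $s^2$-estimate in Lemma~\ref{lem-osc2} with $\sigma=2c$, $\tilde\gamma=\alpha/2$.

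The main obstacle is the careful bookkeeping of the many error terms and the tracking of their dependence on $\alpha$. The extra factor $s^2$ appearing in the integrand for $\gw_{c,\alpha}$, combined with the fact that each use of Lemma~\ref{lem-osc2} or of the polynomial-weighted bounds in Lemma~\ref{lemma-bounds-erfi} costs one or two additional powers of $\alpha^{-1}$, explains the discrepancy between the $\alpha^{-3}$ in the error for $\gb_{c,\alpha}$ and the $\alpha^{-5}$ in the error for $\gw_{c,\alpha}$; verifying the specific constants $3$ and $5$ is essentially a careful accounting exercise once the scheme above is in place.
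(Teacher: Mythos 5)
Your proposal is correct, and for \eqref{asymp-b} it is exactly the paper's argument: substitute $\gm=\Re(e^{-ic\Phi_\alpha}W_{c,\alpha})+\tfrac{\beta}{c\alpha^2}\boO(xe^{-\alpha x^2/4})$ from \eqref{est-w} into \eqref{b-asymp}, treat the oscillatory integral carrying $W_{c,\alpha}$ with Lemma~\ref{lem-osc2} (with $\sigma=-c$, $\tilde\gamma=\alpha/2$) and the remaining tail with Lemma~\ref{lemma-bounds-erfi}. For \eqref{asymp-w} your route is a mild but genuine variant. You bootstrap from the once-integrated formula \eqref{w-asymp}: the explicit term $-\tfrac{\beta B_{c,\alpha}}{2c}xe^{ic\Phi_\alpha(x)-\alpha x^2/4}$ comes from replacing $\gb(x)$ by $B_{c,\alpha}$ in the boundary term via \eqref{est-b}, and the term $\tfrac{i\beta^2 W_{c,\alpha}}{8c}\int_x^\infty s^2e^{-\alpha s^2/4}\,ds$ from splitting $e^{ic\Phi_\alpha}\gn$ through \eqref{est-w} into a non-oscillatory $W$-piece and an $e^{2ic\Phi_\alpha}$-piece absorbed by Lemma~\ref{lem-osc2}. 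The paper instead integrates \eqref{eq-w1} afresh, inserts the just-proved \eqref{asymp-b} into $\int_x^\infty s\,\gb(s)e^{ic\Phi_\alpha(s)}\,ds$, and recovers the boundary-type term from $\tfrac{i\beta B_{c,\alpha}}{2}\int_x^\infty se^{ic\Phi_\alpha(s)}\,ds$ via the refined expansion \eqref{est-osc2} of Lemma~\ref{lem-osc1}, the splitting $\Re(z)=(z+\bar z)/2$ producing the same two remaining pieces as yours. The two bootstraps are equivalent in content: yours never needs Lemma~\ref{lem-osc1} (the integration by parts was already performed in Proposition~\ref{mnb-formulae}), while the paper's reuses the freshly obtained \eqref{asymp-b} together with the sharper oscillatory estimate. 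Your error bookkeeping is also consistent with the statement: the boundary substitutions cost $\boO(\beta^2x^2e^{-\alpha x^2/2}/(c^2\alpha^2))$, Lemma~\ref{lem-osc2} with $\tilde\gamma=\alpha/2$ and the $s^3$-weighted tails of Lemma~\ref{lemma-bounds-erfi} give the remaining contributions, and since $\beta\le1$, $\alpha\le1$ these are all absorbed into the stated $\alpha^{-3}$ and $\alpha^{-5}$ factors.
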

	\begin{proof}
		As usual, we drop the  subscripts $c$ and $\alpha$ in  the rest of the proof.
		Recalling that $\gw=\gm+i\gn$, we have from \eqref{est-w},
		$$
		\gm=\Re (e^{-ic\Phi_\alpha(x)}  W)+ \frac{\beta}{c\alpha^2}
		\boO(x e^{-\alpha x^2/4}).
		$$
		Thus, replacing in \eqref{b-asymp}, 
		\begin{align}
		\label{eq-est-b}
		\gb(x)=&B+\frac{\beta x}{2c} e^{-\alpha x^2/4}
		\Re (e^{-ic\Phi_\alpha(x)} W)
		+\frac{\beta^2 }{c^2 \alpha^{2}}  \boO(x^2 e^{-\alpha x^2/2})
		+\boR_\gb(x),
		\end{align}
		with 
		$$
		\boR_\gb(x)=\frac{\beta}{2c}\Re\left(
		W
		\int_x^\infty \big(1-\frac{\alpha s^2}2 \big)e^{-i c\Phi_{\alpha}(s) -\alpha s^2/4}ds
		+
		\int_x^\infty \big(1-\frac{\alpha s^2}2 \big)\boO \big(
		\frac{se^{-\alpha s^2/2}}{c \alpha^2}
		\big)ds
		\right).
		$$
		By using Lemmas~ \ref{lemma-bounds-erfi} and \ref{lem-osc2} 
		to estimate the first and second integrals, respectively, we conclude that 
		\begin{equation}
		\label{eq-est-b2}
		\boR_\gb(x)=\frac{\beta}{c^2\alpha^3}\boO \big(
		x^2 e^{-\alpha x^2/2}
		\big).
		\end{equation} 
		By putting together \eqref{eq-est-b} and \eqref{eq-est-b2}, we obtain \eqref{asymp-b}.
		To establish \eqref{asymp-w} we integrate \eqref{eq-w1} from $x\geq 1$ and $\infty$,
		and use \eqref{asymp-b} and Lemma~\ref{lemma-bounds-erfi}
		to get
		\begin{equation}
		e^{ic\Phi}\gw(x)-W=I_1(x)+I_2(x)+I_3(x)+\frac{\beta^2}{c^2\alpha^5}
		\boO(xe^{-\alpha x^2/2}),
		\end{equation}
		with
		\begin{equation*}
		I_1(x)=i\frac{\beta B}{2} \int_x^{\infty} se^{ic\Phi_{\alpha}(s)}\, ds, \quad
		I_2(x)=i\frac{\beta^2 W}{8c} \int_x^\infty s^2 e^{-\alpha s^2/4}\, ds, \qquad {\hbox{and}}
		\end{equation*}
		\begin{equation*}
		I_3(x)=i\frac{\beta^2 \bar W }{8c}\int_x^\infty s^2 e^{2i c\Phi_{\alpha}(s) -\alpha s^2/4}\, ds,
		\end{equation*}
		where we have used that $\Re(z)=(z+\bar z)/2$. The conclusion follows invoking again Lemmas~\ref{lemma-bounds-erfi}, \ref{lem-osc1} and \ref{lem-osc2}.
	\end{proof}
	In Figure~\ref{fig-coordenadas} we depict the first components of the trihedron $\{  \m_{c,\alpha}, \n_{c,\alpha} , \b_{c,\alpha} \}$ for $c=0.5$ and $\alpha=0.5$, and $x>0$. As described in Corollary~\ref{cor-asymp} (recall that  $\gw_{c,\alpha}=\gm_{c,\alpha}+i\gn_{c,\alpha}$), in the plots in Figure~\ref{fig-coordenadas} one can observe that, while both $m_{1,c,\alpha}$ and $b_{1,c,\alpha}$ oscillate highly for large values of $x>0$, the component $b_{1,c,\alpha}$ converges to a limit $B_{1,c,\alpha}\approx -0.72$ as $x\rightarrow +\infty$.
	\begin{figure}[h]
		\begin{subfigure}[b]{0.33\textwidth}
			\centering
			\includegraphics[width=1\textwidth,height=0.15\textheight]{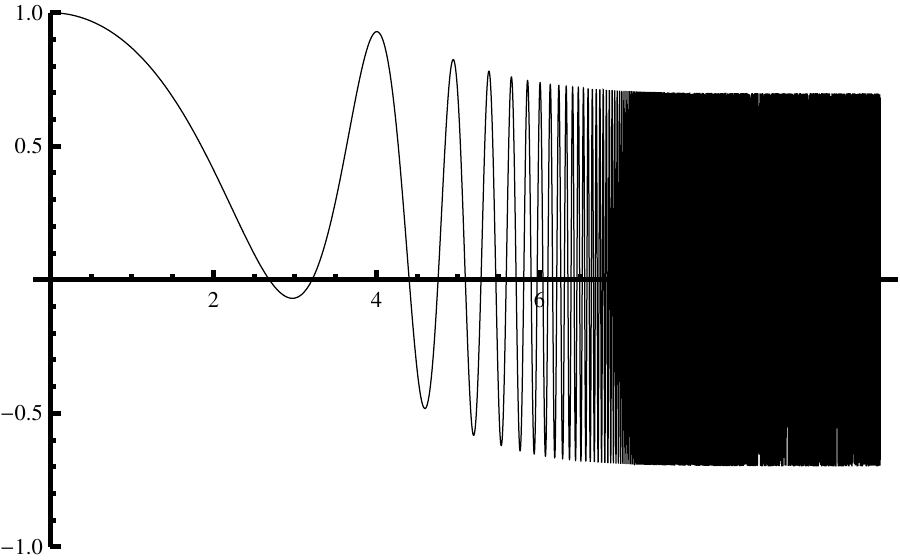}
			\caption{$m_{1,c,\alpha}$}
		\end{subfigure}
		\begin{subfigure}[b]{0.33\textwidth}
			\centering
			\includegraphics[width=\textwidth,height=0.15\textheight]{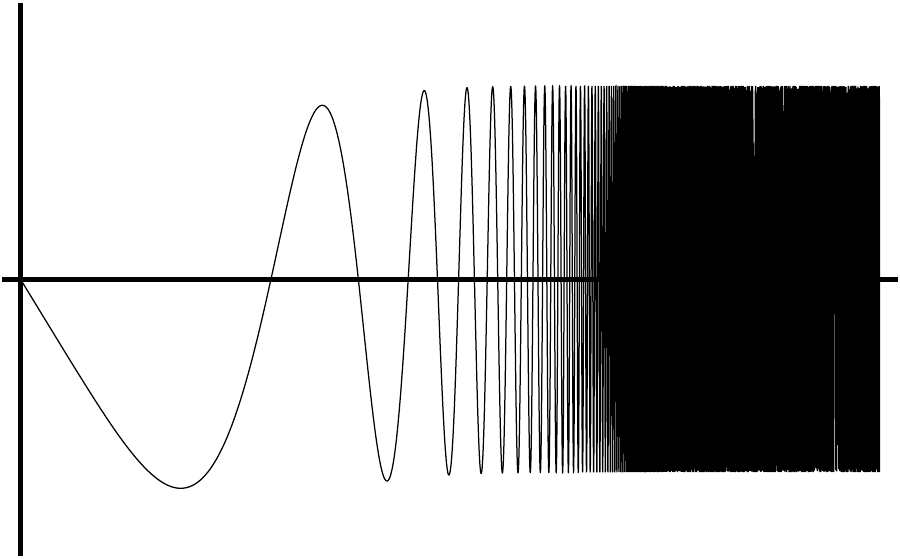}
			\caption{$n_{1,c,\alpha}$}
		\end{subfigure}
		\begin{subfigure}[b]{0.33\textwidth}
			\centering
			\includegraphics[width=0.9\textwidth,height=0.15\textheight]{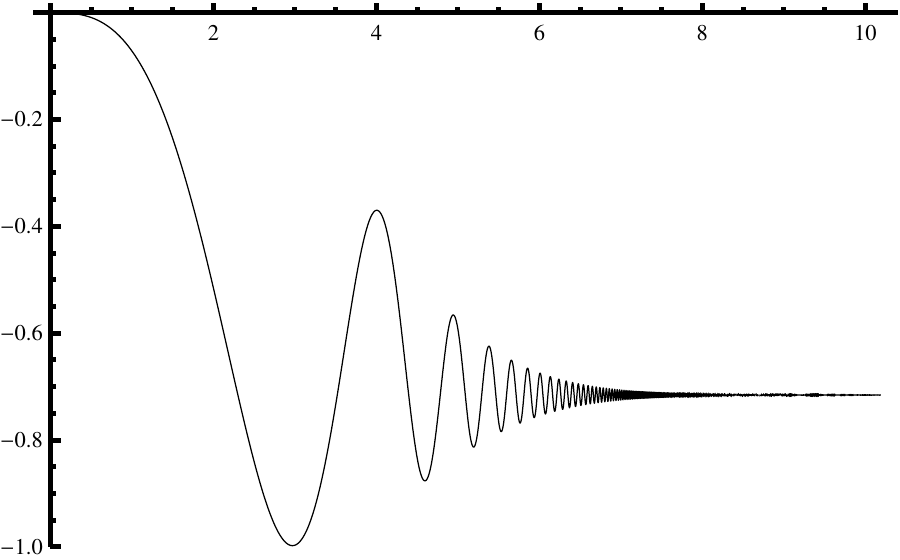}
			\caption{$b_{1,c,\alpha}$}
			\label{fig-b}
		\end{subfigure}
		\caption{Functions $m_{1,c,\alpha}$,
			$n_{1,c,\alpha}$ and $b_{1,c,\alpha}$ for $c=0.5$ and $\alpha=0.5$ on $\R^+$.
			The limit at infinity in  (iii) is  $B_{1,c,\alpha}\approx-0.72$.}
		\label{fig-coordenadas}
	\end{figure}
	\subsection{Proof of Theorem~\ref{thm-conv}}
	For simplicity, we will drop the subscripts $c$ and $\alpha$ in the proof of Theorem~\ref{thm-conv}.
	\begin{proof}[Proof of Theorem~\ref{thm-conv}]
		Let $\{ \m, \n, \b \}$ be the  solution of the Serret--Frenet system \eqref{profile-serret}--\eqref{profile-kt}  with initial condition \eqref{profile-initial}. By ODE theory, we have that the solution  $\{ \m, \n, \b \}$ is smooth, is global, and satisfies 
		\begin{equation}
		\label{unitary}
		\abs{\m(x)}=\abs{\n(x)}=\abs{\b(x)}=1, \quad \text{ for all }x\in \R,
		\end{equation}
		and  the orthogonality relations
		\begin{equation}
		\label{ortho}
		\m(x)\cdot \n(x)=\m(x)\cdot \b(x)=\n(x)\cdot \b(x)=0, \quad \text{ for all }x\in \R.
		\end{equation}
		Define
		$$
		\mm(x,t)=\m\left(  \frac{x}{\sqrt{T-t}}\right), \qquad t<T.
		$$
		Then, it is straightforward to check that $\m$ is a smooth solution of the profile equation \eqref{profile}, and consequently $\mm$ solves \eqref{LLG} for $t\in(-\infty, T)$. Moreover, using the Serret--Frenet system \eqref{profile-serret}--\eqref{profile-kt}, we have 
		$$
		|\partial_x \mm (x,t)|=\left|  \frac{1}{\sqrt{T-t}} \m'\left(  \frac{x}{\sqrt{T-t}}\right) \right|=\frac{c}{\sqrt{T-t}} e^{\frac{\alpha x^2}{4(T-t)}}.
		$$
		This shows part \ref{regular}.
		
		Notice also that, since $\{ m_j, n_j , b_j   \}$ for $j\in\{ 1,2,3\}$ solves \eqref{serret2} with initial condition
		$(1,0,0)$, $(0,1,0)$ and $(0,0,1)$ respectively, the relation \eqref{unidad} is satisfied, i.e.
		\begin{equation}\label{relation}
		m_j^2(x)+n_j^2(x)+b_j^2(x)=1, \quad \text{ for all }x\in\R, \quad j=1,2,3.
		\end{equation}
		Part \ref{parity} follows from the uniqueness given by the Cauchy--Lipschitz theorem for  the solution of \eqref{profile}--\eqref{profile-kt}
		with initial condition \eqref{profile-initial}, and the invariance
		of \eqref{profile}--\eqref{profile-kt}--\eqref{profile-initial}  under the transformations
		\begin{align*}
		\m(x)&=(m_1(x),m_2(x),m_3(x))\ \rightarrow \  (m_1(-x), -m_2(-x), -m_3(-x)),\qquad
		\\
		\n(x)&=(n_1(x),n_2(x),n_3(x))\ \rightarrow \   (-n_1(-x), n_2(-x), n_3(-x)),\qquad
		\\
		\b(x)&=(b_1(x),b_2(x),b_3(x))\ \rightarrow \  (-b_1(-x), b_2(-x), b_3(-x)),
		\end{align*}
		Setting $W_{j}=\rho_j e^{i \phi_j}$, with $\rho_j\geq 0$ and $\phi_j\in[0,2\pi]$, the asymptotics for $\m_j$ and it derivative $\m'_j$, for $j\in\{1,2,3\}$ in part \ref{asymptotics} are a direct consequence of the asymptotic behavior of  the profile established in Proposition~\ref{mnb-formulae} and 
		Corollary~\ref{cor-asymp} (recall also that $m'(x)=ce^{\alpha\frac{x^2}{4}}n(x)$ and $\gw=\gm+i\gn$).
		In particular the following limits exist:
		$$
		W_{j}:=\lim_{x\rightarrow \infty} e^{ic\Phi_\alpha(x)} (m_j+in_j)(x), \quad B_j:=\lim_{x\to\infty}b_{j}(x). 
		$$
		Notice also that the relations in  \eqref{relation} implies that $\rho_j^2+B_j^2=1$, and the fact that $\b$ is unitary implies that
		$B_{1}^2+B_{2}^2+B_{3}^2=1$, and that $\rho_1^2+\rho_2^2+\rho_3^2=2$.
		
		The convergence in part \ref{convergence} is an immediate consequence of the definition of $\mm$ in terms of the profile $\m$, and the asymptotics established in part \ref{asymptotics}. The relations between $\rho^{-}_{j}$ and $\rho_{j}$, for $j\in\{1,2,3\}$ follow from the  parity relations for the profile $\m$ established in part \ref{parity}.
		
		It remains to prove part \ref{IVP}. For  $\varphi \in W^{1,\infty}(\R)$,  bearing in mind 
		\eqref{w-asymp} and \eqref{est-w}, it suffices to show that
		\begin{equation}
		\label{limit-origen}
		\lim_{t\to 0^+}\int_\R e^{-i c \Phi_{\alpha}(x/\sqrt{t})} \varphi(x)dx=0, \ \text{ and }\  \lim_{t\to 0^+} \int_\R \big(1+ \frac{\abs{x}}{\sqrt t}\big )e^{-\frac{\alpha x^2}{4t}}\abs{\varphi(x)}dx=0.
		\end{equation}
		The second limit is a direct consequence of following the explicit computations:
		\bq
		\label{integrales-exp}
		\int_\R e^{-\frac{\alpha x^2}{4t}}dx=\sqrt{\frac{\pi t}\alpha},
		\quad 
		\text{ and }\quad 
		\int_\R \abs{x}e^{-\frac{\alpha x^2}{4t}}dx=\frac{4 t}\alpha.
		\eq
		For the first limit in \eqref{limit-origen}, we integrate by parts to obtain
		\begin{align*}
		ic \int_\R e^{-i c \Phi_{\alpha}(x/\sqrt{t})}\varphi(x)dx=&
		-\sqrt{t}\int_\R \big( e^{-i c \Phi_{\alpha}(x/\sqrt{t})} )' e^{-\frac{\alpha x^2}{4t}} \varphi(x)dx\\
		=&\int_\R e^{-i c \Phi_{\alpha}(x/\sqrt{t})-\frac{\alpha x^2}{4t}} \left(\sqrt{t}\varphi'(x)-\frac{\alpha x}{2\sqrt{t}}\varphi(x) \right)dx.
		\end{align*}
		Since $\abs{\varphi}$ and $\abs{\varphi'}$ are bounded on $\R$, the conclusion follows again by using \eqref{integrales-exp}.
	\end{proof}
	\medskip

	\section{Limiting behaviour of the trajectories of the profiles}
	\label{sec-limit-circle}
	In this section we prove Theorem~\ref{plane}.  In what follows  we denote  $\mathcal{C}^{\pm}_{c,\alpha}$ the great circle $\mathcal{C}_{c,\alpha}={\mathcal{P}}^{\pm}_{c,\alpha}\cap \mathbb{S}^2$, with $\mathcal{P}^{\pm}_{c,\alpha}$ being the planes passing through the origin with (unitary) normal vectors
	$$
	\bm B^{+}_{c,\alpha}=(B_{1,c,\alpha}, B_{2,c,\alpha}, B_{3,c,\alpha})\qquad {\hbox{and}}\qquad  
	\bm B^{-}_{c,\alpha}=(-B_{1,c,\alpha}, B_{2,c,\alpha}, B_{3,c,\alpha}),
	$$
	given by Theorem~\ref{thm-conv}.
	
	We will show that the trajectories of the profiles $\m_{c,\alpha}$ converge to the great circles $\mathcal{C}^{\pm}_{c,\alpha}$  as $x\to \pm \infty$, and study the behavior of the limit circles $\mathcal{C}^{\pm}_{c,\alpha}$ with respect to the parameters $c$ and $\alpha$, analyzing the angle $\vartheta_{c,\alpha}\in[0,\pi]$ between their normal vectors $\bm B^{+}_{c,\alpha}$ and $\bm B^{-}_{c,\alpha}$.
	
	We start this section stating a corollary of Theorem~\ref{thm-conv} that will be used in what follows.  Precisely, recalling that $\gw=\gm+i\gn$,  from \eqref{est-w} and using the constants defined in Theorem~\ref{thm-conv},
	we have the following
	\begin{cor}
		\label{cor-facil}
		For $ x\geq 1$ and $j\in \{1,2,3\}$, we have
		\begin{align*}
		m_{j,c,\alpha}(x)&= \rho_{j,c,\alpha} \cos(c\Phi_\alpha({x})-\phi_j)+\boR_j(x), 
		\quad n_{j,c,\alpha}(x)=- \rho_{j,c,\alpha} \sin(c\Phi_\alpha({x})-\phi_j)+\tilde \boR_j(x),
		\end{align*}
		for some functions $\boR_j$ and $\tilde \boR_j$ satisfying  the bounds
		$\abs{\boR_j(x)}, \abs{\tilde \boR_j(x)}\leq 10\beta/( c \alpha^2){x}e^{-\alpha x^2/4}$.
	\end{cor}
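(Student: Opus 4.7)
The corollary is a direct componentwise reading of the estimate \eqref{est-w} in Proposition~\ref{mnb-formulae} applied to each of the three scalar Serret--Frenet systems indexed by $j\in\{1,2,3\}$, combined with the polar representation of the limiting constants $W_{j,c,\alpha}$ introduced in Theorem~\ref{thm-conv}\ref{asymptotics}. Since the proposition was already proved for arbitrary initial data on the unit sphere, there is essentially no new analytic content; the statement is a bookkeeping consequence.

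Concretely, my plan is as follows. First, I would note that for each $j\in\{1,2,3\}$ the triple $(m_{j,c,\alpha},n_{j,c,\alpha},b_{j,c,\alpha})$ solves the scalar system \eqref{serret2} with the initial datum being the $j$-th canonical basis vector of $\R^3$, which satisfies $m_j(0)^2+n_j(0)^2+b_j(0)^2=1$. Hence Proposition~\ref{mnb-formulae}(ii) applies to each such triple and yields, for $x\geq 1$,
\begin{equation*}
\bigl|\gw_{j}(x)-e^{-ic\Phi_\alpha(x)}W_{j,c,\alpha}\bigr|\leq \frac{10\beta}{c\alpha^2}\,x\,e^{-\alpha x^2/4},
\end{equation*}
where $\gw_j:=m_{j,c,\alpha}+i\,n_{j,c,\alpha}$ and $W_{j,c,\alpha}$ is the corresponding limiting constant given by \eqref{w-infty}.

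Next, I would invoke Theorem~\ref{thm-conv}\ref{asymptotics}, where the nonnegative modulus $\rho_{j,c,\alpha}\in[0,1]$ and the phase $\phi_{j,c,\alpha}\in[0,2\pi)$ are defined by the polar decomposition $W_{j,c,\alpha}=\rho_{j,c,\alpha}\,e^{i\phi_{j,c,\alpha}}$. Substituting this identity and expanding the exponential gives
\begin{equation*}
e^{-ic\Phi_\alpha(x)}W_{j,c,\alpha}=\rho_{j,c,\alpha}\cos\bigl(c\Phi_\alpha(x)-\phi_{j,c,\alpha}\bigr)-i\,\rho_{j,c,\alpha}\sin\bigl(c\Phi_\alpha(x)-\phi_{j,c,\alpha}\bigr).
\end{equation*}

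Finally, I would define
\begin{equation*}
\boR_j(x):=m_{j,c,\alpha}(x)-\rho_{j,c,\alpha}\cos\bigl(c\Phi_\alpha(x)-\phi_{j,c,\alpha}\bigr),\qquad \tilde\boR_j(x):=n_{j,c,\alpha}(x)+\rho_{j,c,\alpha}\sin\bigl(c\Phi_\alpha(x)-\phi_{j,c,\alpha}\bigr),
\end{equation*}
so that $\boR_j(x)+i\tilde\boR_j(x)=\gw_j(x)-e^{-ic\Phi_\alpha(x)}W_{j,c,\alpha}$. Since the modulus of a complex number dominates both its real and imaginary parts, the displayed bound above immediately yields $|\boR_j(x)|,\,|\tilde\boR_j(x)|\leq 10\beta/(c\alpha^2)\,x\,e^{-\alpha x^2/4}$ for all $x\geq 1$, which is the claim. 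There is no real obstacle here; the only point requiring slight care is to make sure the proposition is applied with the correct initial conditions for each $j$ (so that the constants $W_{j,c,\alpha}$ coincide with those singled out in Theorem~\ref{thm-conv}\ref{asymptotics}), and to verify that the bound \eqref{est-w} is uniform in those initial data, which it manifestly is since the constant $10\beta/(c\alpha^2)$ in its derivation did not depend on $\gm(0),\gn(0),\gb(0)$.
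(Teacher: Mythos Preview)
Your proposal is correct and follows exactly the route the paper takes: the paper does not even spell out a proof, introducing the corollary simply as a direct consequence of \eqref{est-w} together with the polar decomposition $W_{j,c,\alpha}=\rho_{j,c,\alpha}e^{i\phi_{j,c,\alpha}}$ from Theorem~\ref{thm-conv}\ref{asymptotics}. Your observation that the bound in \eqref{est-w} is independent of the initial data (so it applies uniformly to all three components) is the only point worth checking, and you have addressed it.
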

	We will also use the two lemmas below in the proof of Theorem~\ref{plane}. The first one establishes some relations between the constants appearing  in the asymptotics of the profile $\m_{c,\alpha}$ that can be
	deduced by using some geometric properties of the Serret--Frenet system.
	\begin{lemma}
		\label{lem-constants}
		The constants given by Theorem~\ref{thm-conv} satisfy the following identities
		\begin{equation}
		\label{iden1}
		\begin{gathered}
		B_{1,c,\alpha}=\rho_{2,c,\alpha}\rho_{3,c,\alpha} \sin(\phi_{3,c,\alpha}-\phi_{2,c,\alpha}), \ B_{2,c,\alpha}=\rho_{1,c,\alpha}\rho_{3,c,\alpha}\sin(\phi_{1,c,\alpha}-\phi_{3,c,\alpha}),\\
		B_{3,c,\alpha}=\rho_{1,c,\alpha}\rho_{2,c,\alpha}\sin(\phi_{2,c,\alpha}-\phi_{1,c,\alpha}),
		\end{gathered}
		\end{equation}
		and
		\begin{gather}
		\label{iden2}
		B_{1,c,\alpha} \rho_{1,c,\alpha} e^{ i \phi_{1,c,\alpha}}+B_{2,c,\alpha} \rho_{2,c,\alpha} e^{ i \phi_{2,c,\alpha}}+B_{3,c,\infty} \rho_{3,c,\alpha} e^{ i \phi_{3,c,\alpha}}=0,\\
		\label{iden3}
		\rho_{1,c,\alpha}^2 e^{2 i \phi_{1,c,\alpha}}+\rho_{2,c,\alpha}^2 e^{2 i \phi_{2,c,\alpha}}+\rho_{3,c,\alpha}^2 e^{2 i \phi_{3,c,\alpha}}=0.
		\end{gather}
	\end{lemma}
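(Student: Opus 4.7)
\textbf{Proof proposal for Lemma~\ref{lem-constants}.}
The plan is to read all three identities off from the exact algebraic relations satisfied by the Serret--Frenet frame $\{\m_{c,\alpha},\n_{c,\alpha},\b_{c,\alpha}\}$ of orthonormal vectors, by substituting the asymptotic expansions of Corollary~\ref{cor-facil} and sending $x\to\infty$. For notational convenience I will drop the subscripts $c,\alpha$ and set $\theta(x)=c\Phi_\alpha(x)$, noting that $\theta(x)\to\infty$ as $x\to\infty$, and that every remainder in Corollary~\ref{cor-facil} is of order $x e^{-\alpha x^2/4}$, hence vanishes at infinity. The main (trivial) obstacle will be justifying the ``Fourier-type'' step: an oscillating trigonometric polynomial in $\theta(x)$ that tends to $0$ as $x\to\infty$ must have all its coefficients equal to $0$; this I would justify by evaluating along sequences $x_n\to\infty$ chosen so that $e^{i\theta(x_n)}$ takes the specific values $1$ and $i$, which is possible since $\theta$ is continuous, unbounded and monotone.

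For \eqref{iden1} I would start from the exact identity $\b=\m\times\n$, which in components reads $b_1=m_2n_3-m_3n_2$ and cyclic permutations. Plugging in $m_j=\rho_j\cos(\theta-\phi_j)+\boR_j$ and $n_j=-\rho_j\sin(\theta-\phi_j)+\tilde\boR_j$ from Corollary~\ref{cor-facil}, and expanding the product, the oscillating pieces recombine via the sine subtraction formula:
\begin{equation*}
m_2n_3-m_3n_2
=\rho_2\rho_3\bigl[\sin(\theta-\phi_2)\cos(\theta-\phi_3)-\cos(\theta-\phi_2)\sin(\theta-\phi_3)\bigr]
+\mathcal{O}(x e^{-\alpha x^2/4})
=\rho_2\rho_3\sin(\phi_3-\phi_2)+o(1).
\end{equation*}
Since $b_1(x)\to B_1$ as $x\to\infty$ by Proposition~\ref{mnb-formulae}, letting $x\to\infty$ yields $B_1=\rho_2\rho_3\sin(\phi_3-\phi_2)$, and the cyclic permutations give the other two identities of \eqref{iden1}.

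For \eqref{iden2} I would use the orthogonality $\m\cdot\b=0$. Substituting the same asymptotics and using $b_j(x)=B_j+o(1)$, we have
\begin{equation*}
0=\sum_{j=1}^{3}m_jb_j
=\sum_{j=1}^{3}\rho_j B_j\cos(\theta-\phi_j)+o(1)
=\Re\Bigl(e^{i\theta}\sum_{j=1}^{3}\rho_jB_j e^{-i\phi_j}\Bigr)+o(1).
\end{equation*}
Combining this with the analogous consequence of $\n\cdot\b=0$, which gives $\Im\bigl(e^{i\theta}\sum_j\rho_jB_j e^{-i\phi_j}\bigr)=o(1)$, or alternatively picking $x_n\to\infty$ along which $e^{i\theta(x_n)}$ equals $1$ and $i$ respectively, forces $\sum_j\rho_j B_j e^{-i\phi_j}=0$, which is \eqref{iden2} after conjugation.

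For \eqref{iden3} I would apply $|\m|^2=1$. Inserting the asymptotics and using the doubling identity $2\cos^2\alpha=1+\cos(2\alpha)$ together with the known relation $\sum_j\rho_j^2=2$ established in Theorem~\ref{thm-conv}\,\ref{asymptotics}, one gets
\begin{equation*}
0=\sum_{j=1}^{3}m_j^2-1=\tfrac{1}{2}\sum_{j=1}^{3}\rho_j^2\cos\bigl(2\theta-2\phi_j\bigr)+o(1)
=\tfrac{1}{2}\,\Re\Bigl(e^{2i\theta}\sum_{j=1}^{3}\rho_j^2 e^{-2i\phi_j}\Bigr)+o(1).
\end{equation*}
The same oscillation argument (alternatively, using $\m\cdot\n=0$ to obtain the imaginary part) forces $\sum_j\rho_j^2 e^{-2i\phi_j}=0$, i.e.\ \eqref{iden3} upon taking complex conjugates. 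This completes the proposed strategy; no new analytic input is needed beyond Corollary~\ref{cor-facil} and the elementary Serret--Frenet orthonormality relations.
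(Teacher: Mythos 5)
Your proposal is correct and follows essentially the same route as the paper: exploit the orthonormality relations of the Serret--Frenet frame ($\b=\m\times\n$, $\b\cdot\m=\b\cdot\n=0$, $\m\cdot\n=0$, $\abs{\m}=1$) together with the asymptotics of Corollary~\ref{cor-facil} and pass to the limit $x\to\infty$. The only cosmetic difference is that the paper works with the complexified relations $(\m+i\n)\cdot\b=0$ (and similarly for \eqref{iden3}), dividing out the unimodular factor $e^{-ic\Phi_\alpha(x)}$ before letting $x\to\infty$, which renders your subsequence/oscillation argument unnecessary, though your version of that step is also valid.
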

	\begin{proof}
		Dropping the subscripts $c$ and $\alpha$,  
		using the relation 
		$\b=\m\times \n$ and the asymptotics in Corollary~\ref{cor-facil}, 
		we get for $x\geq1$,
		\begin{align*}
		b_1(x)&=	-\rho_2\rho_3\big(\cos(c\Phi_\alpha(x)-\phi_2)\sin(c\Phi_\alpha(x)-\phi_3)
		- \cos(c\Phi_\alpha(x)-\phi_3)\sin(c\Phi_\alpha(x)-\phi_2)
		\big)+o(1),\\
		b_2(x)&=	-\rho_3\rho_1\big( \cos(c\Phi_\alpha(x)-\phi_3)\sin(c\Phi_\alpha(x)-\phi_1)
		- \cos(c\Phi_\alpha(x)-\phi_1)\sin(c\Phi_\alpha(x)-\phi_3)
		\big)+o(1),\\
		b_3(x)&=	-\rho_1\rho_2\big( \cos(c\Phi_\alpha(x)-\phi_1)\sin(c\Phi_\alpha(x)-\phi_2)
		- \cos(c\Phi_\alpha(x)-\phi_2)\sin(c\Phi_\alpha(x)-\phi_1),
		\big)+o(1),
		\end{align*}
		where $o(1)$ is a function of $x$, which  depends on the parameters $\alpha$ and $c$, 
		that converges to $0$ as $x\to \infty$.
		Noticing that, for $k,j\in\{1,2,3\}$,
		$$-\cos(c\Phi_\alpha(x)-\phi_j)\sin(c\Phi_\alpha(x)-\phi_k)
		+ \cos(c\Phi_\alpha(x)-\phi_k)\sin(c\Phi_\alpha(x)-\phi_j)=\sin(\phi_k-\phi_j),
		$$
		we obtain
		\begin{align*}
		b_1(x)&=\rho_2\rho_3\sin(\phi_3-\phi_2)+o(1),\\
		b_2(x)&=\rho_1\rho_3\sin(\phi_1-\phi_3)+o(1),\\
		b_3(x)&=\rho_1\rho_2\sin(\phi_2-\phi_1)+o(1).
		\end{align*}
		Letting $x\to\infty$, in the above identities we obtain \eqref{iden1}.
		
		To establish the other identities, we first recall that the vectors $\m$, $\n$ and $\b$ satisfy the following relations
		$$
		\m\cdot \n=\n\cdot \b=\b\cdot \m=0\qquad {\hbox{and}}\qquad |\m|=|\n|=|\b|=1.
		$$
		Now, from the asymptotics in Corollary~\ref{cor-facil} and the identity
		$$(\m+i\n)\cdot\b =0,
		$$
		we have
		\begin{equation*}
		b_{1}(x)\rho_1 e^{-i(c\Phi_\alpha(x)-\phi_1)}+b_2(x)\rho_2 e^{-i(c\Phi_\alpha(x)-\phi_2)}+b_{3}(x) \rho_3 e^{-i(c\Phi_\alpha(x)-\phi_3)}=o(1),
		\end{equation*}
		for $i\in\{ 1,2,3\}$. Dividing  by 
		$e^{-ic\Phi_\alpha(x)}$ and letting $x\to\infty$, we obtain formula \eqref{iden2}. Finally, formula \eqref{iden3} follows 
		easily using a similar argument, bearing in mind the orthogonality relation  $ \m\cdot \n=0,$ and that $2\cos(y)\sin(y)=\Im (e^{2iy})$, for $y\in \R$.
	\end{proof}
	
	\begin{remark}
		Although we do not use \eqref{iden3} in this work, this relation could be helpful in establishing
		further properties of the solutions.
	\end{remark}
	
	Next we study the angle between the normal vectors to the great circles $\mathcal{C}^{\pm}_{c,\alpha}$, given by
	$
	\vartheta_{c,\alpha}=\arccos(2B_{1,c,\alpha}^2-1),
	$
	with $\vartheta_{c,\alpha}\in [0,\pi]$. We have the following.
	\begin{lemma}
		\label{lem:ang-c}
		For  $c\geq \beta \sqrt{\pi}/\sqrt{\alpha}$, we have
		\begin{equation}
		\label{est-angulo}
		\vartheta_{c,\alpha}\geq \arccos\left( -1+\frac{2\pi\beta^2 }{c^2\alpha}\right).
		\end{equation}
	\end{lemma}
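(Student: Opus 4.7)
\noindent\textbf{Proof plan for Lemma~\ref{lem:ang-c}.}

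Since $\arccos$ is decreasing on $[-1,1]$ and the inequality to be proved is equivalent to
$2B_{1,c,\alpha}^{2}-1\leq -1+2\pi\beta^{2}/(c^{2}\alpha)$, the lemma reduces to showing
$$ B_{1,c,\alpha}^{2}\ \leq\ \frac{\pi\beta^{2}}{c^{2}\alpha}, $$
and the hypothesis $c\geq \beta\sqrt{\pi/\alpha}$ is exactly what makes the right-hand side lie in $[0,1]$, so that the target value $-1+2\pi\beta^{2}/(c^{2}\alpha)$ is in the domain of $\arccos$. My plan is therefore to bound $|B_{1,c,\alpha}|$ directly from the integral representation \eqref{b-infty}.

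First I specialise \eqref{b-infty} to $j=1$. Because the initial conditions \eqref{IC} give $b_{1}(0)=0$, one obtains
$$ B_{1,c,\alpha}\ =\ -\frac{\beta}{2c}\int_{0}^{\infty}\Bigl(1-\frac{\alpha s^{2}}{2}\Bigr)e^{-\alpha s^{2}/4}\,m_{1,c,\alpha}(s)\,ds. $$
Since $\m_{c,\alpha}$ takes values in $\S^{2}$, we have the trivial pointwise bound $|m_{1,c,\alpha}(s)|\leq 1$, which yields
$$ |B_{1,c,\alpha}|\ \leq\ \frac{\beta}{2c}\int_{0}^{\infty}\Bigl|1-\frac{\alpha s^{2}}{2}\Bigr|e^{-\alpha s^{2}/4}\,ds. $$

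Next I evaluate this integral exactly. The key observation is the antiderivative identity
$$ \frac{d}{ds}\bigl(s\,e^{-\alpha s^{2}/4}\bigr)\ =\ \Bigl(1-\frac{\alpha s^{2}}{2}\Bigr)e^{-\alpha s^{2}/4}, $$
so the integrand changes sign exactly at $s_{0}=\sqrt{2/\alpha}$, where $s\,e^{-\alpha s^{2}/4}$ attains its maximum value $\sqrt{2/\alpha}\,e^{-1/2}$. Splitting the integral at $s_{0}$ and using this antiderivative on each piece gives
$$ \int_{0}^{\infty}\Bigl|1-\frac{\alpha s^{2}}{2}\Bigr|e^{-\alpha s^{2}/4}\,ds\ =\ 2\sqrt{\frac{2}{\alpha}}\,e^{-1/2}. $$
Combining this with the preceding estimate yields $|B_{1,c,\alpha}|\leq \beta\sqrt{2}/(c\sqrt{\alpha e})$, hence
$$ B_{1,c,\alpha}^{2}\ \leq\ \frac{2\beta^{2}}{c^{2}\alpha\,e}\ \leq\ \frac{\pi\beta^{2}}{c^{2}\alpha}, $$
where the last inequality is $2/e<\pi$. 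Inserting this bound into $\vartheta_{c,\alpha}=\arccos(2B_{1,c,\alpha}^{2}-1)$ and using the monotonicity of $\arccos$ produces \eqref{est-angulo}.

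The only subtle point is the exact computation of the integral of $|1-\alpha s^{2}/2|e^{-\alpha s^{2}/4}$; once the antiderivative identity above is spotted, everything else is straightforward. Note that no oscillatory cancellation in $m_{1,c,\alpha}$ is needed — the crude bound $|m_{1,c,\alpha}|\leq 1$ already leaves a factor of $2/(\pi e)<1$ of slack, which is why the hypothesis $c\geq \beta\sqrt{\pi/\alpha}$ arises purely as the regime in which the target bound on $B_{1,c,\alpha}^{2}$ is nontrivial (i.e.\ $\leq 1$).
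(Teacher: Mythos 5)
Your proof is correct and follows essentially the same route as the paper: both use the representation \eqref{b-infty} (equivalently \eqref{b-asymp} at $x=0$) together with $b_{1,c,\alpha}(0)=0$ and $|m_{1,c,\alpha}|\leq 1$ to bound $|B_{1,c,\alpha}|$, and then conclude by monotonicity of $\arccos$, the hypothesis $c\geq\beta\sqrt{\pi}/\sqrt{\alpha}$ guaranteeing the argument lies in $[-1,1]$. The only difference is cosmetic: you evaluate $\int_0^\infty|1-\tfrac{\alpha s^2}{2}|e^{-\alpha s^2/4}\,ds=2\sqrt{2/\alpha}\,e^{-1/2}$ exactly (a slightly sharper constant, then use $2/e<\pi$), whereas the paper bounds $|1-\tfrac{\alpha s^2}{2}|\leq 1+\tfrac{\alpha s^2}{2}$ and computes $\int_0^\infty(1+\tfrac{\alpha s^2}{2})e^{-\alpha s^2/4}\,ds=2\sqrt{\pi}/\sqrt{\alpha}$, which gives the stated bound directly.
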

	\begin{proof}
		Using the formula \eqref{b-asymp} in Proposition~\ref{mnb-formulae} for $b_{1,c,\alpha}$ with $x=0$,
		we get
		\begin{equation*}
		\abs{ b_{1,c,\alpha}(0)-B_{1,c,\alpha}}\leq \frac{\beta}{2c}
		\int_0^\infty \left(1+\frac{\alpha s^2}{2} \right)e^{-\alpha s^2/4 }.
		\end{equation*}
		Noticing that $b_{1,c,\alpha}(0)=0$, and that
		$$\int_0^\infty \left(1+\frac{\alpha s^2}{2} \right)e^{-\alpha s^2/4 }=\frac{2\sqrt{\pi}}{\sqrt{\alpha}},$$
		we conclude that
		\begin{equation*}
		2B^2_{1,c,\alpha} \leq \frac{2\pi\beta^2}{c^2 \alpha}.
		\end{equation*}
		Since $c\geq \beta\sqrt{\pi}/\sqrt{\alpha}$, we have
		$-1+2\pi\beta^2 /(c^2\alpha)\in [-1,1]$, so we can use that the function $\arccos$
		is decreasing to obtain \eqref{est-angulo}.
	\end{proof}
	We continue to prove Theorem~\ref{plane}.
	\begin{proof}[Proof of Theorem~\ref{plane}]
		As usual, we omit the  subscripts $c$ and $\alpha$ when there is no confusion. 
		In view of the symmetries established in Theorem~\ref{thm-conv}, it is enough to prove the theorem
		for $x\to \infty$. We start noticing that $\abs{\bm B_{c,\alpha}}=1$, so that
		the distance between $\m$ and $\boP^+$ is given by
		\begin{equation*}
		\dist(\m(x),\boP^+)=
		\abs{ m_1(x)B_{1}+m_2(x)B_{2}+m_3(x)B_{3}}.
		\end{equation*}
		To compute the leading term, we notice that using \eqref{iden2}, we have
		\begin{equation*}
		\sum_{j=1}^3\rho_j B_j \cos(c\Phi_\alpha(x)-\phi_j)=\Re
		\Bigg(
		e^{ic \Phi_\alpha(x)}
		\Bigg(
		\sum_{j=1}^3\rho_j B_je^{-i\phi_j}
		\Bigg)
		\Bigg)=0.
		\end{equation*}
		Thus the estimates in Corollary~\ref{cor-facil} give us
		\begin{equation}\label{proof-distance}
		\dist(\m(x),\boP^+))\leq 
		\frac{30\beta}{ c \alpha^2 }xe^{-\alpha x^2/4},\qquad {\hbox{for }} x\geq 1,
		\end{equation}
		Let us fix $x\geq 1$ and let $\bm Q(x)$ be the orthogonal projection of $\m(x)$ on the plane $\boP^+$,
		so that  $\dist(\m(x),\boP^+)= \dist(\m(x),\bm Q(x))$. We also set $\bm C(x) \in \boC^+$ such that
		$\dist(\m(x),\boC^+)= \dist(\m(x),\bm C(x))$.
		By the Pythagorean theorem,
		$$\abs{\bm Q(x)}^2=1-\dist(\m(x),\bm Q(x))^2$$
		and $$\dist(\m(x),\bm C(x))^2=\dist(\m(x),\bm Q(x))^2+(1-\abs{\bm Q(x)})^2.$$
		Therefore
		$$\dist(\m(x),\bm C(x))^2=2-2(1-\dist(\m(x),\bm Q(x))^2)^{1/2},$$
		and using the elementary inequality $\sqrt{1-y}\geq 1-y$, for $y\in[0,1]$,
		we get
		\begin{equation*}
		\dist(\m(x),\bm C(x))\leq \sqrt{2} \dist(\m(x),\bm Q(x)).
		\end{equation*}
		Combing this estimate with \eqref{proof-distance}, we conclude that
		$$\dist(\m(x),\boC^+))=\dist(\m(x),\bm C(x)))\leq \sqrt{2} \dist(\m(x),\boP^+))\leq
		\frac{30\sqrt{2}\beta}{c \alpha }xe^{-\alpha x^2/4}.
		$$
		The limits in \eqref{dist2} follow at once using the definition of $\mm_{c,\alpha}$ in \eqref{def-m} (recall that $\vartheta_{c,\alpha}\in[0,\pi]$). 
		
		The statement in $(ii)$ is an immediate consequence of Lemma~\ref{lem:ang-c}. This finishes the proof of Theorem~\ref{plane}.
	\end{proof}
	The limit 
	$
	\lim_{c\rightarrow \infty}\vartheta_{c,\alpha} =\pi 
	$
	in part (ii) of Theorem~\ref{plane} helps us to understand the angle between the great circles as $c\to\infty$. However, the behavior of the limit circles $\mathcal{C}^{\pm}_{c,\alpha}$ for  $c$ small is much more involved. We conclude this section with some reflections on the behavior of the limit circles $\mathcal{C}^{\pm}_{c,\alpha}$ when $c$ is small. 
	
	Let us remark that when $c=0$, the explicit solution to \eqref{serret}--\eqref{IC} is given by
	\begin{equation}
	\label{sol-c-0}
	\begin{aligned}
	\m_{0,\alpha}(x)&=(1,0,0),\\
	\n_{0,\alpha}(x)&=(0,\cos(\beta x^2/4),-\sin(\beta x^2/4)),\\
	\b_{0,\alpha}(x)&=(0,\sin(\beta x^2/4),\cos(\beta x^2/4)).
	\end{aligned}
	\end{equation}
	Thus we see that in this limit case, there is a change in the  behavior of the solution: There is no limit circle
	and the vector $\b_{0,\alpha}$ does not have a limit at infinity.
	On the other hand, we know that 
	$(\m_{c,\alpha}(x),\n_{c,\alpha}(x),\b_{c,\alpha}(x))$ are continuous with respect to the to $c$, $\alpha$ and $x$. Therefore,
	\begin{equation}
	\label{dependence-c-0}
	\lim_{c\to 0}b_{1,c,\alpha}(x)=b_{1,0,\alpha}(x)=0, \quad \text{ for all }x\in \R.
	\end{equation}
	Of course, we cannot conclude from \eqref{dependence-c-0} estimates for $B_{1,c,\alpha}$, as $c$ goes to $0$.
	For this reason, we performed some numerical simulations
	for different values of $c$ small. 
	In Figures~\ref{fig-c-small} and \ref{fig-curvas-c-001}, we show one of these simulations, in the  case $\alpha=0.5$
	and $c=0.01$, where we see that $m_{1,c,\alpha}\approx 1$ and 
	$b_{1,c,\alpha}\approx 0$ on $[0,2]$ in agreement with \eqref{sol-c-0} and the continuous dependence on $c$. On the other hand, for $x\geq 8.5$, we have $b_{1,c,\alpha}(x)\approx -1$.
	
	However, it seems difficult to infer from our simulations the behavior of 
	$\bm B_{c,\alpha}$ as $c$ goes to zero. For instance, 
	we have computed numerically $\bm B_{c,\alpha}$, 
	and it is not clear that this quantity converges for $c$
	small. For instance, we have obtained 
	$B_{1,c,\alpha}=-0.99215$, for $c=10^{-12}$, 
	$B_{1,c,\alpha}= -0.992045$, for $c=10^{-14}$, 
	and 
	$B_{1,c,\alpha}= -0.991965$, for $c=10^{-16}$. 
	
	\begin{figure}[h]
		\begin{subfigure}[b]{0.5\textwidth}
			\centering
			\includegraphics[width=1\textwidth,height=0.15\textheight]{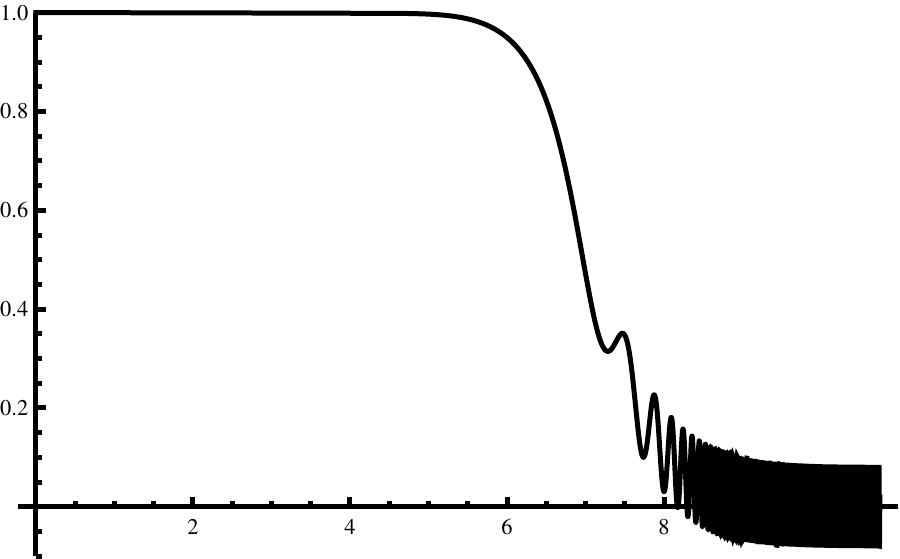}
			\caption{$m_{1,c,\alpha}$}
		\end{subfigure}
		\begin{subfigure}[b]{0.5\textwidth}
			\centering
			\includegraphics[width=\textwidth,height=0.15\textheight]{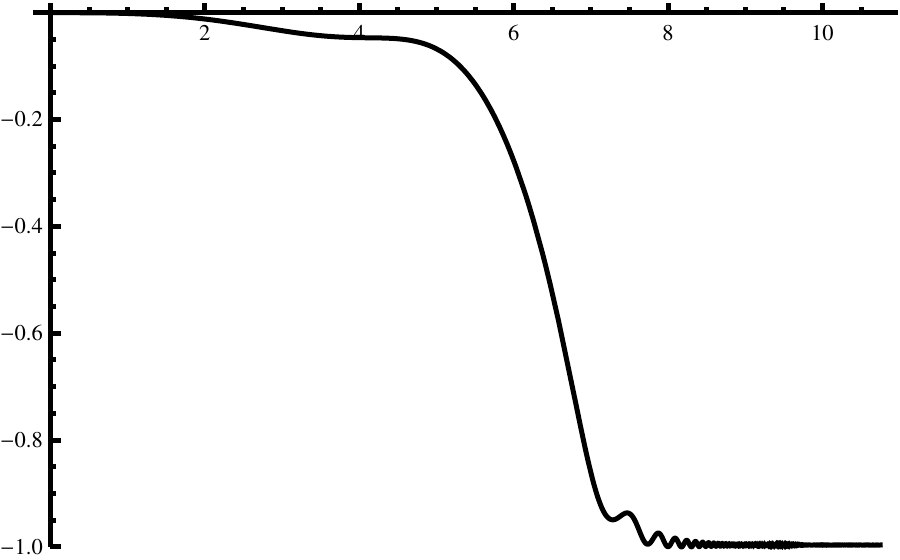}
			\caption{$b_{1,c,\alpha}$}
		\end{subfigure}
		\caption{Functions $m_{1,c,\alpha}$
			and 	$b_{1,c,\alpha}$  for $c=0.01$ and $\alpha=0.5$. 
			The limit at infinity in  (ii) is  $B_{1,c,\alpha}\approx -0.996417$.}
		\label{fig-c-small}
	\end{figure}
	\begin{figure}[h]
		\centering
		\begin{subfigure}[b]{0.5\textwidth}
			\begin{overpic}[trim=0 15mm 0 0,clip, scale=0.9]{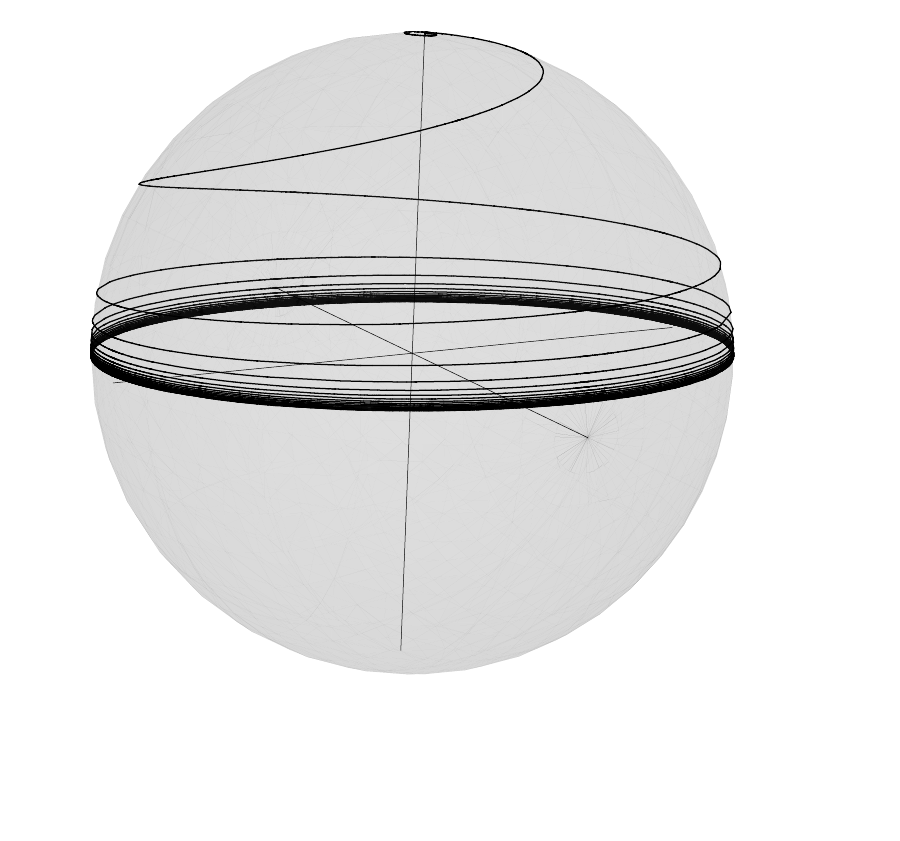}
				\put(46,75){\footnotesize{$m_{1,c,\alpha}$}}
				\put(69,25){\footnotesize{$m_{2,c,\alpha}$}}
				\put(4,32){\footnotesize{$m_{3,c,\alpha}$}}
			\end{overpic}
		\end{subfigure}%
		\caption{Profile $\m_{c,\alpha}$ for $c=0.01$ and $\alpha=0.5$.}
		\label{fig-curvas-c-001}
	\end{figure}
	\bigskip
	
	\noindent {\bf{Acknowledgements. }}  S.~ Guti\'errez was partially supported by  ERCEA Advanced Grant 2014 669689 - HADE. The Universit\' e de Lille also supported S.~Guti\'errez's research visit  during July 2018 through their Invited Research Speaker Scheme.
	A.~de Laire was partially supported by the Labex CEMPI
	(ANR-11-LABX-0007-01), the ANR project ``Dispersive and random waves'' (ANR-18-CE40-0020-01), and the 
	MATH-AmSud project EEQUADD-II.

\bibliographystyle{abbrv}

\end{document}